\documentclass[12pt]{article}
\usepackage[utf8]{inputenc}
\usepackage{tabularx}

\usepackage{amsfonts,amsmath,amssymb,amsthm,amscd}
\usepackage{authblk}

\usepackage{graphicx}
\usepackage{caption}
\usepackage{subcaption}
\usepackage{hyperref}
\usepackage{courier}
\usepackage{fancybox}
\usepackage{fancyhdr}
\usepackage{float}
\usepackage{alltt}
\usepackage{fancyvrb}

\usepackage[dvipsnames,usenames]{color}
\usepackage[shortlabels]{enumitem}

\usepackage{framed}
\usepackage{multirow,array}
\usepackage{booktabs}
\usepackage{bm}

%\usepackage{refcheck}

%NUEVO PAQUETE: PARA PODER TACHAR EL TEXTO Y QUEDE CLARA LA CORRECCIÓN \sout tachado; \xout: tachado doble en diagonal
%\usepackage{ulem}

\newtheorem{corollary}{Corollary}
\newtheorem{definition}{Definition}
\newtheorem{lemma}{Lemma}
\newtheorem{proposition}{Proposition}
\newtheorem{remark}{Remark}
\newtheorem{theorem}{Theorem}
\newtheorem{claim}{Claim}

%%%%Comandos Víctor
\usepackage{xcolor}
\newcounter{teoA}
\newtheorem{teoa}[teoA]{Theorem}

%%%%%%

\providecommand{\keywords}[1]
{
  \small	
  \textbf{\textit{Keywords:}} #1
}
\providecommand{\subjclass}[1]
{
  \small	
  \textbf{\textit{2020 Mathematics Subject Classification:}} #1
}

\title{On the accumulation points of non-periodic orbits of a difference equation of fourth order}
\author{A. Linero Bas\footnote{Universidad de Murcia, Departamento de Matem\'{a}ticas, Campus de Espinardo, 30100 Murcia (Spain). \textit{E-mail address:} lineroba@um.es}, V. Mañosa\footnote{Universitat Politècnica de Catalunya, Departament de Matemàtiques, Institut de Matem\`atiques de la UPC (IMTech). Colom 1, 08222 Terrassa (Spain). \textit{E-mail address:} victor.manosa@upc.edu}, D. Nieves\footnote{Universidad de Murcia, Departamento de Matem\'{a}ticas, Campus de Espinardo, 30100 Murcia (Spain). \textit{E-mail address:} daniel.nieves@um.es}}
\date{}

\begin{document}

\maketitle

\begin{abstract}
In this paper, we are interested in analyzing the dynamics of the fourth-order difference equation 
$x_{n+4} = \max\{x_{n+3},x_{n+2},x_{n+1},0\}-x_n$, with arbitrary real initial conditions.
We fully determine the accumulation point sets of the non-periodic solutions that, in fact, are configured as proper compact intervals of the real line. This study complements the previous knowledge of the dynamics of the difference equation already achieved in
[\textit{M. Cs\"{o}rnyei, M. Laczkovich}, Monatsh. Math. \textbf{132} (2001), 215-236] and 
[\textit{A. Linero Bas, D. Nieves Roldán,} J. Difference Equ. Appl. \textbf{27} (2021), no. 11, 1608-1645].

\end{abstract}

\subjclass{39A10, 39A23, 39A05, 37E99}

\keywords{Difference equations; non-periodic solutions; accumulation points; boundedness; Kronecker's Theorem; first integral}

\vfill

\section{Introduction}

By a difference equation of max-type, we understand an autonomous or non-autonomous difference equation whose solutions are generated by a recurrence law involving the max operator. There exist a large literature dealing with different classes of max-type equations, in which the main interest is to know the dynamics at large of the orbits generated by the recurrence, and try to apply these equations for the modelling of processes appearing in fields as the biology, economy, control theory,...; for more information, consult book \cite{Gro} or the survey \cite{LNsurvey}, and references therein.

One of the most popular difference equations of this class is the max-type version of Lyness equation, given by $x_{n+2}={\max\{1,x_{n+1}\}}/{x_{n}}$, or its generalization in the form $x_{n+2}={\max\{A,x_{n+1}^k\}}/{x_{n+1}^{\ell}x_{n}}$, where $A, k, \ell$ are real coefficients. In particular, in the case of positive initial conditions, by the change of variables $y_n=\ln x_n$, the difference equation $x_{n+2}={\max\{1,x_{n+1}\}}/{x_{n}}$ is transformed into $y_{n+2}=\max\{y_{n+1}, 0\}-y_{n}$; the dynamics of both equations is simple: all the solutions are periodic of period $5$ (not necessarily minimal). A natural generalization to the above equation is given by 
\begin{equation}\label{e:intro}
y_{n+k}=\max\{y_{n+k-1},\ldots,y_{n+2},y_{n+1}, 0\}-y_{n}.
\end{equation}

If $k=3$, it is easily seen that all the solutions of Equation \eqref{e:intro} are periodic of period $8$ (not necessarily minimal), so the dynamics remains to be simple. 

In this paper,  we will go a step further  and will focus on the fourth-order difference equation
\begin{equation} \label{Eq_main}
    x_{n+4} = \max\{x_{n+3},x_{n+2},x_{n+1},0\}-x_n,
\end{equation}

\noindent with arbitrary real initial conditions. 

In \cite{Lin}, the periodic character of Equation (\ref{Eq_main}) was deeply studied, and the authors were able to describe precisely its set of periods, as well as its associate periodic orbits. Nevertheless, nothing was said about the non-periodic orbits beyond their existence and that they are bounded, a result that was already obtained in 
\cite{Cso}. In this sense, a natural question that arises is to analyze the dynamics of such orbits. In particular, we will focus on the set of accumulation points of a solution $(x_n)$ under the iteration of Eq.~(\ref{Eq_main}), whenever the non-periodic character is ensured. Thus, the present work can be seen as a natural continuation of~\cite{Lin}.

Based on \cite{Lin}, where the authors analyzed the possible configurations of the initial conditions in order to obtain periodic sequences, it is known that the trajectory of a tuple of non-negative initial conditions  $(x_1,x_2,x_3,x_4)$, where $x_1=\max_n\{x_n\}\geq 0$, satisfies some of the conditions of five cases $C_i$, $i=1,\ldots, 5$ detailed in Figure \ref{Diagrama} and Table~\ref{Table_cases}. The orbit of a solution visits these cases following a series of routes composed of successive cases $C_j$ that are applied according to several conditions about the ordering of the elements of the tuple.

\begin{figure}[ht] 
\centering
\includegraphics[scale=0.45]{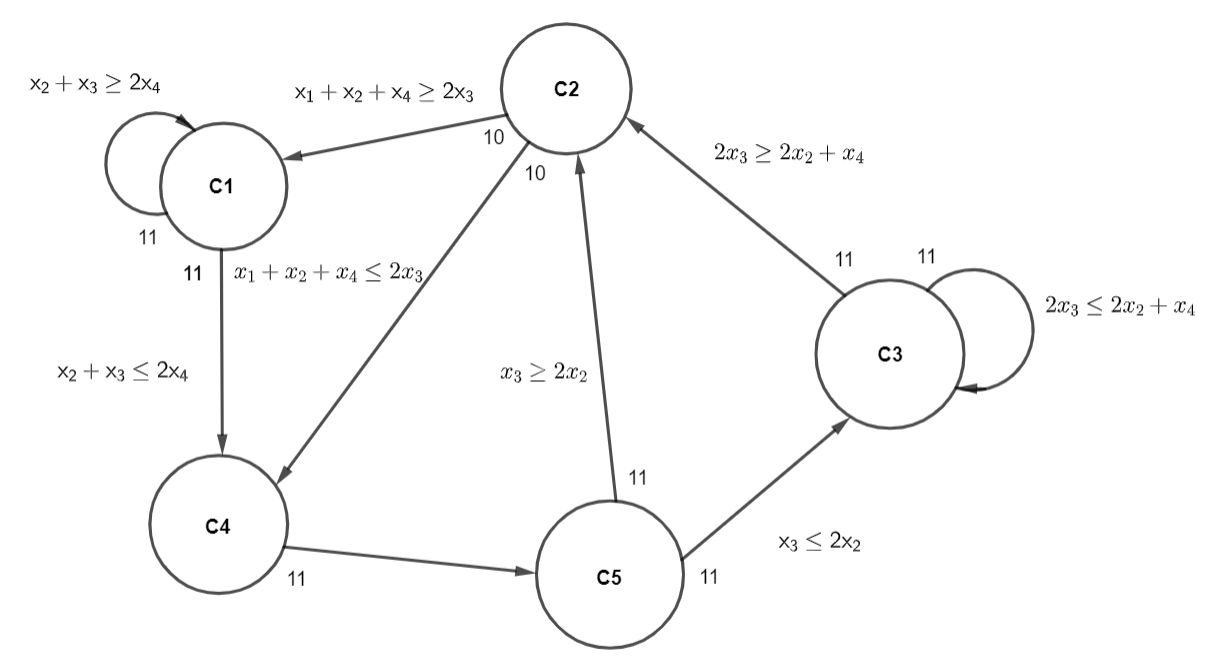}
\caption{Evolution of the cases under the iteration of  Equation~(\ref{Eq_main}).}
\label{Diagrama}
\end{figure}

For the sake of completeness, we add Table \ref{Table_cases} from \cite{Lin} in order to show how a tuple $(x_1,x_2,x_3,x_4)$ evolves under Equation (\ref{Eq_main}) when it satisfies the conditions of each case. It should be mentioned that the information collected in the table follows by the inspection of the proof of \cite[Proposition 12]{Lin}.

\begin{table}[ht] 
\begin{tabular}{@{}cccc@{}}
\toprule
\textbf{$C_1:$} & $x_1 \geq x_2 \geq x_4 \geq x_3\geq 0$ & $\xrightarrow{11}$ & $(x_{12}= x_1, x_2+x_3-x_4, x_3, x_4)$ \\ \midrule
\textbf{$C_2:$} & \begin{tabular}[c]{@{}c@{}}$x_1 \geq x_3 \geq \max\{x_2, x_4\}\geq 0$\\ $x_3 \geq x_2 + x_4$\end{tabular}  & $\xrightarrow{10}$ & $(x_{11}= x_1, x_1-x_3+x_2+x_4, x_2, x_3)$ \\ \midrule
\textbf{$C_3:$} & \begin{tabular}[c]{@{}c@{}}$x_1 \geq x_3 \geq \max\{x_2, x_4\}\geq 0$ \\ $x_3 \leq x_2 + x_4$\end{tabular} & $\xrightarrow{11}$ & $(x_{12}= x_1, x_2, x_3, x_2+x_4-x_3)$ \\ \midrule
\textbf{$C_4:$} & $x_1 \geq x_4 \geq x_2 \geq x_3\geq 0$ & $\xrightarrow{11}$ & $(x_{12}= x_1, x_3, x_4+x_3-x_2, x_4)$ \\ \midrule
\textbf{$C_5:$} & $x_1 \geq x_4 \geq x_3 \geq x_2\geq 0$ & $\xrightarrow{11}$ & $(x_{12}= x_1, x_2, x_4, x_2+x_4-x_3)$     \\ \bottomrule
\end{tabular}
\caption{Evolution of a tuple $(x_1,x_2,x_3,x_4)$ in the different cases.}
\label{Table_cases}
\end{table}

For instance, the tuple $(x_1,x_2,x_3,x_4)=(3,1,\sqrt{2},2)$ satisfies the conditions of Case $C_5$, namely $x_1\geq x_4\geq x_3\geq x_2\geq 0$, and after $11$ iterations it evolves to the tuple $(x_{12},x_{13},x_{14},x_{15})=(3,1,2,3-\sqrt{2})$ that satisfies now the conditions of Case~$C_3$.

Prior to state the main result, let us recall the definition of an equivalence relation established in \cite{Lin}. Notice that, by Eq. (\ref{Eq_main}), for given initial conditions, we can build a unique sequence $(x_n)_{n\in \mathbb{Z}}$ (realize that backward terms are obtained in a one-to-one way through the relation $z_{n}=\max\{z_{n+1}, z_{n+2},z_{n+3},0\}-z_{n+4}$).

\begin{definition} \label{Def_equi}
Set $\bm{x}= (x_1,x_2,x_3,x_4)$, $\bm{y}= (y_1,y_2,y_3,y_4) \in \mathbb{R}^4$. We will say that they are equivalent, and write $\bm{x} \sim \bm{y} $, if and only if $\bm{x}$ and $\bm{y}$ generate under Eq. (\ref{Eq_main}) the same sequences $(x_n)_{n\in \mathbb{Z}}$ and $(y_n)_{n\in \mathbb{Z}}$ up to a shift.
\end{definition}

The main result of the paper establishes that every non-periodic solution $(x_n)_n$ of Equation~(\ref{Eq_main}) is dense in a compact interval of the real line.

\begin{teoa}\label{t:teoprincipal}
Given real initial conditions $(x_1,x_2,x_3,x_4)$ that generate a non-periodic orbit under Equation~(\ref{Eq_main}), its set of accumulation points is a compact interval. Even more,  the tuple $(x_1,x_2,x_3,x_4)$ is equivalent to some tuple of initial conditions $(x,y,z,w),$ with $x=\max\{x_n:n\geq 1\}$, $x\geq w \geq y\geq z \geq 0$, and $\frac{w-z}{x} \in \mathbb{R}\setminus\mathbb{Q}$, and the orbit accumulates in the compact interval $[\min\{w-x,-z\},x]$.
\end{teoa}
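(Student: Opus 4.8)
The plan is to exploit the structure already extracted in Table~\ref{Table_cases} and the transition diagram of Figure~\ref{Diagrama}. The first step is a normalization: given arbitrary non-periodic initial conditions, I would use the equivalence relation of Definition~\ref{Def_equi} together with the case analysis of \cite{Lin} to reduce to a canonical representative $(x,y,z,w)$ with $x=\max\{x_n:n\geq 1\}$ and the ordering $x\geq w\geq y\geq z\geq 0$. This requires showing that along any orbit the maximum value $x$ is eventually attained and preserved (the constancy of the first term $x_1$ in every row of Table~\ref{Table_cases} suggests that $x_1$ is a \emph{first integral} of the dynamics, which is presumably where the keyword ``first integral'' enters), and that by shifting we may assume the tuple sits in one of the five cases with the desired ordering $C_5$-type shape. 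I expect this reduction to be largely bookkeeping once the invariance of the maximum is established.

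The second and central step is to track a single \emph{conserved affine combination} of the four coordinates under the case map. Inspecting the transitions, each case map fixes the first coordinate at $x$ and acts on the remaining three by a piecewise-affine, volume-preserving (unimodular) transformation; I would compute the induced map on the ``gap'' quantity and show that the dynamics transverse to $x$ reduces, after rescaling by $x$, to a rotation of the circle by the angle $\theta=(w-z)/x$. This is the heart of the matter: the whole point of singling out the ratio $(w-z)/x$ and demanding it be irrational is that the orbit should project to an orbit of an irrational rotation. Concretely, I would identify an explicit semiconjugacy $\Phi$ from the orbit of Equation~\eqref{Eq_main} to the rotation $R_\theta(t)=t+\theta \pmod 1$ on $\mathbb{R}/\mathbb{Z}$, so that each $x_n$ equals $g(R_\theta^n(t_0))$ for a fixed piecewise-linear (or piecewise-affine) function $g$.

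The third step invokes \textbf{Kronecker's Theorem} (listed among the keywords): when $\theta=(w-z)/x\in\mathbb{R}\setminus\mathbb{Q}$, the orbit $\{R_\theta^n(t_0):n\ge 1\}$ is dense in $\mathbb{R}/\mathbb{Z}$, and hence equidistributed. Composing with the continuous, piecewise-affine coding function $g$, the closure of $\{x_n\}$ is exactly $g(\mathbb{R}/\mathbb{Z})$, which is the image of a circle under a continuous map and therefore a single compact interval $[\min g,\max g]$. I would then evaluate the extreme values of $g$ explicitly and verify that they equal $x$ at the top and $\min\{w-x,-z\}$ at the bottom, matching the asserted interval $[\min\{w-x,-z\},x]$; the upper endpoint $x$ is immediate from the invariance of the maximum, while the lower endpoint should come from evaluating $g$ at the points of $\mathbb{R}/\mathbb{Z}$ where the piecewise definition attains its minimum. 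Conversely, in the rational case the rotation orbit is finite, $g$ takes finitely many values, and the solution is periodic, which closes the dichotomy and confirms that non-periodicity forces $\theta\notin\mathbb{Q}$.

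The main obstacle I anticipate is step two: constructing the semiconjugacy $\Phi$ to an \emph{exact} rotation rather than merely to some interval-exchange or more general piecewise-isometry. Because the case map is only piecewise-affine and switches among the five cases $C_1,\dots,C_5$ according to inequalities, a priori the reduced dynamics could be an interval exchange transformation with several intervals rather than a rigid rotation; showing that these pieces assemble into a single rotation by angle $(w-z)/x$ — equivalently, that the return map to a suitable section is a pure rotation — is the delicate point. I would address it by following the routes through the diagram in Figure~\ref{Diagrama}, computing the cumulative affine action along each closed loop of cases and checking that it is a translation by a constant multiple of $(w-z)$ on the relevant section, independent of which route is taken; the uniformity of this translation is precisely what makes Kronecker's theorem applicable and pins down the irrationality condition on $(w-z)/x$.
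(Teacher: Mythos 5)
Your step one (normalization via Definition~\ref{Def_equi} to a tuple $(x,y,z,w)$ in Case $C_4$ with $x$ the invariant maximum) matches the paper, and your closing dichotomy ``irrational $\Rightarrow$ dense, rational $\Rightarrow$ periodic'' is Proposition~\ref{P:noper}. But the central mechanism you propose --- an exact semiconjugacy $x_n=g\big(R_\theta^n(t_0)\big)$ with a \emph{single continuous} piecewise-affine $g$ on the circle --- is a genuine gap, and you underestimate it even while flagging it. What the route computations of Section~\ref{Sec_general} actually give is this: the first-return map to the section of tuples in Case $C_4$ acts on the second coordinate $u\in[z,w]$ by $u\mapsto u+x \pmod{w-z}$, a rigid rotation. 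However, the number of iterations of Equation~\eqref{Eq_main} between consecutive returns to that section is \emph{not constant}: it depends on which route $R_1,\dots,R_4$ is taken and on the loop counts $m_1$, $m_3$ in $C_1$ and $C_3$, all of which vary with the section point. So the orbit is a suspension over a rotation with a variable roof, and the time index $n$ of $(x_n)$ does not synchronize with the iterates of the rotation by $(w-z)/x$; the representation $x_n=g(R_\theta^n t_0)$ you posit does not exist in that form. Worse, making the ``orbit closure $=$ continuous image of a circle'' picture rigorous would essentially determine the limit set of the associated map $F$ in $\mathbb{R}^4$ as a closed curve --- precisely the question the paper leaves \emph{open} in Section~\ref{Sec_conclusions}.

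Because of this, your density step also omits the two places where most of the real work lies, and which the rotation picture alone cannot supply. First, to get the lower endpoint $\min\{w-x,-z\}$ one must prove that the orbit passes through \emph{both} $C_1$ and $C_3$ infinitely many times (the paper's Proposition~\ref{P:claimC3C1}, proved by a fractional-part contradiction argument using Corollary~\ref{Cor_kro}); otherwise one of the two families of non-positive terms, $tx+y-s(w-z)$ (from $C_1$, filling $[w-x,0]$) and $tx+y-s(w-z)-w$ (from $C_3$, filling $[-z,0]$), could die out and the claimed interval would be wrong. Second, the indices $s$ of the terms that actually occur in the orbit skip values, so Kronecker's theorem cannot be applied to them directly: the paper embeds them into a full sequence indexed by all $n\in\mathbb{N}$, proves that full sequence is dense in $[-x,0]$, and then shows by case inspection of Table~\ref{Table_cases} that the terms belonging to the orbit are exactly those landing in $[w-x,0]$ (respectively $[-z,0]$), the absent ones landing in the complementary subinterval. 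This splitting argument is the substitute for knowing the image of your hypothetical $g$, and nothing in your proposal produces it. In short, the approach as written would need to solve a strictly harder (and currently open) problem to deliver the theorem, while the two concrete lemmas that actually close the paper's proof are missing.
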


The paper is organized as follows: in Section \ref{Sec_prelim} we establish some preliminaries, in concrete, from the boundedness character of every solution of Equation (\ref{Eq_main}), we derive that we can assume, without loss of generality, that the first term of the sequence is the maximum of the orbit, regardless of whether the orbit is periodic or not. Moreover, if we consider initial conditions that generate a non-periodic orbit, we will show that such solution is surrounded by periodic solutions of arbitrarily large periods. 

Section \ref{Sec_condU} describes the evolution  of non-negative tuples $(x_1,x_2,x_3,x_4)$, with $x_1=\max_n\{x_n\}$, starting from Case $C_4$. We use this description to obtain the accumulation points of the solution $(x_n)$ in Section~\ref{S:mainproof}. First, we prove that the non-negative elements of the solution accumulate in the interval $[0,x_1]$; second, after proving that the initial tuple visits all the five Cases $C_j$, $1\leq j\leq 5$, infinitely many times, we obtain that the non-positive terms of the solution are dense in the interval $[\min\{x_4-x_1, -x_3\},0].$ The above study will provide us immediately the proof of Theorem~\ref{t:teoprincipal}.  Finally, we present some comments and observations for further research in Section \ref{Sec_conclusions}. In concrete, we find a new first integral of the discrete dynamical system associated to Equation \eqref{Eq_main} and comment on the possible existence of another first integral for the system based on numerical simulations. 

\section{Preliminaries} \label{Sec_prelim}

Firstly, we comment on the boundedness character of the solutions of Eq. (\ref{Eq_main}). Let us consider a sequence generated by the initial conditions $(x_1,x_2,x_3,x_4)$ under such equation. 

By \cite{Cso}, we know that every sequence generated by Eq. (\ref{Eq_main}) is bounded. Furthermore, it is proved there that $$|x_n| \leq M:=\max\{|x_j|: 1 \leq j \leq 12\} \ \text{for  all} \ n\geq 1. $$

It is easy to see that,  in fact, the bound $M$ is attached by the maximum of the positive terms in the sequence, that is $M = \max\{x_n\} \geq 0$. Indeed, take $x_j$ such that $|x_j|=M$, then:
\begin{itemize}
    \item[$\bullet$] If $x_j \geq 0$, the result follows.
    \item[$\bullet$] If $x_j < 0$, we can find the following terms in the sequence generated by the recurrence: $\ldots, -|x_j|, x_{j+1}, x_{j+2}, x_{j+3}, \ldots$ So,
    \begin{eqnarray*}
    x_{j+4} &=& \max\{x_{j+1}, x_{j+2}, x_{j+3}, 0\} - (-|x_j|) \\
    &=& \max\{x_{j+1}, x_{j+2}, x_{j+3}, 0\} + |x_j| \geq M \geq 0.
    \end{eqnarray*}
    This implies $x_{j+4} = M$ and we have reached the maximum of the sequence with a positive term. 
\end{itemize}

According to the equivalence relation given in Definition \ref{Def_equi}, we can state, without loss of generality, the following restriction. 
\begin{claim} \label{Claim0}
We can assume that $x_1=\max\{x_n:n\geq 1\}$ for every sequence $(x_n)$ generated by Equation (\ref{Eq_main}). 
\end{claim}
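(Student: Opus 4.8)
The plan is to combine the boundedness facts just established with the shift invariance built into the equivalence relation of Definition~\ref{Def_equi}. The preceding discussion already shows that the quantity $M:=\max\{x_n:n\ge 1\}$ is well defined and attained: the sequence is bounded by \cite{Cso}, the bound $M=\max\{|x_j|:1\le j\le 12\}$ is realized within the first twelve terms, and the displayed computation shows that whenever $|x_j|=M$ comes from a negative term, the term $x_{j+4}$ equals $M$ and is non-negative. Hence there is a finite index $j^\ast\ge 1$ with $x_{j^\ast}=M=\max\{x_n:n\ge 1\}\ge 0$.

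Next I would pass to the shifted tuple. Since the initial conditions $(x_1,x_2,x_3,x_4)$ determine a single bi-infinite sequence $(x_n)_{n\in\mathbb Z}$ (forward by \eqref{Eq_main}, backward by the reversed recurrence recalled just before Definition~\ref{Def_equi}), the tuple $\bm{x}'=(x_{j^\ast},x_{j^\ast+1},x_{j^\ast+2},x_{j^\ast+3})$ generates exactly the same bi-infinite sequence, merely re-indexed by the shift $n\mapsto n+(j^\ast-1)$. By Definition~\ref{Def_equi} we therefore have $\bm{x}'\sim(x_1,x_2,x_3,x_4)$, so replacing the initial tuple by $\bm{x}'$ is legitimate ``without loss of generality''.

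Finally I would verify that $\bm{x}'$ has the desired property. Writing $x'_n=x_{j^\ast+n-1}$ for its forward sequence, the first term is $x'_1=x_{j^\ast}=M$. On the one hand $\{x'_n:n\ge 1\}=\{x_n:n\ge j^\ast\}\subseteq\{x_n:n\ge 1\}$, whence $\max\{x'_n:n\ge 1\}\le M$; on the other hand $x'_1=M$ lies in this set, so $\max\{x'_n:n\ge 1\}=M=x'_1$. Relabelling $\bm{x}'$ as $(x_1,x_2,x_3,x_4)$ then yields $x_1=\max\{x_n:n\ge 1\}$, as claimed.

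I do not expect a genuine obstacle here; the only points requiring care are that the maximum is actually attained rather than merely approached, which is exactly the content of the boundedness estimate together with the sign argument above, and that the forward shift does not increase the supremum of the forward orbit, which holds because choosing $j^\ast$ at an index realizing the global forward maximum keeps that maximal value inside the new forward orbit.
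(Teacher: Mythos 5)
Your proposal is correct and takes essentially the same route as the paper: use the boundedness estimate from \cite{Cso} and the sign argument to locate an index $j^\ast$ where the forward maximum is attained by a non-negative term, then shift the initial tuple to that index and invoke the equivalence relation of Definition~\ref{Def_equi}. Your final verification that the shift does not alter the forward supremum is a detail the paper leaves implicit, but it is the same argument.
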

Indeed, from the above analysis, if $j_{max}$ is the index that verifies $$x_{j_{max}} = \max\{|x_j|:1\leq j\leq 12\} \geq |x_n| \ \ \text{for \ all} \ n\geq 1,$$

\noindent we can take the shifted sequence generated by $y_1=x_{j_{max}}$; $y_2=x_{j_{max}+1}$; $y_3=x_{j_{max}+2}$; $y_4=x_{j_{max}+3}$, which, by Definition \ref{Def_equi}, will generate the same solution as $(x_1,x_2,x_3,x_4)$ under Eq. (\ref{Eq_main}). $\hfill\Box$

Therefore, 
in the sequel we will consider that $x_1=\max\{x_n:n\geq 1\}$.   Notice that, necessarily, this implies that $x_1\geq 0$ and also that $x_2, x_3,$ and $x_4$ are non-negative terms. 

Next, we will center our study in the evolution of non-periodic orbits generated by initial conditions $(x_1,x_2,x_3,x_4)$ under Eq.~(\ref{Eq_main}).
The following result gives a necessary and sufficient condition in order to have a non-periodic orbit. It is a direct consequence of the argument that gives rise to Eq. (10) in \cite[p. 28]{Lin}.
\begin{proposition}\label{P:noper}\cite{Lin}
Let $(x_1,x_2,x_3,x_4)$ be a tuple of initial conditions, with $x_1=\max_n\{x_n\}$, and holding the restrictions of Case $C_4$. Let $(x_n)$ be the corresponding solution generated by Equation~(\ref{Eq_main}). Then, the solution is non-periodic if and only if $\frac{x_4-x_3}{x_1}\not\in\mathbb Q.$
\end{proposition}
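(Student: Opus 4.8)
The plan is to extract the periodicity criterion from the explicit orbit evolution tabulated in Table~\ref{Table_cases}, by reducing the dynamics to a rigid rotation of rotation number $\frac{x_4-x_3}{x_1}$. Since every Case transition in the table fixes the first coordinate, and that coordinate equals the global maximum, I would work on the section of tuples whose first entry is $x_1$ and follow the reduced triple $(x_2,x_3,x_4)$ under the piecewise-affine return map $T$ read off case by case from the table; here $x_1$ plays the role of a preserved scale. Beginning in Case $C_4$ one lists, as in the worked example of the Introduction, the successive Cases the orbit visits until it first comes back to a Case-$C_4$ configuration. Each application of $T$ is affine and fixes $x_1$, so the resulting first-return map is again affine with $x_1$ fixed.

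The key point to establish is that this first-return map advances the relevant phase variable by the constant amount $x_4-x_3$ taken modulo $x_1$; normalizing lengths by $x_1$, it is the rotation of $\mathbb{R}/\mathbb{Z}$ by $\alpha:=\frac{x_4-x_3}{x_1}$. This is precisely what the computation yielding Eq.~(10) in \cite{Lin} provides: propagating the affine formulas of Table~\ref{Table_cases} through one full cycle, the position of the tuple inside $[0,x_1]$ is shifted by $x_4-x_3$, while everything else is fixed by the conserved data of the reduced map, the maximum $x_1$ together with its remaining invariants. Note that the phase is a function of the tuple and, conversely, the conserved data together with the phase recover the tuple; hence periodicity of the solution is equivalent to periodicity of the phase under this rotation.

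Both implications then follow. If the orbit is periodic, its configuration recurs after some number $k$ of cycles, so the phase satisfies $k(x_4-x_3)\equiv 0 \pmod{x_1}$, that is $k(x_4-x_3)=m\,x_1$ for some integer $m$, and $\frac{x_4-x_3}{x_1}=\frac{m}{k}\in\mathbb{Q}$. Conversely, if $\frac{x_4-x_3}{x_1}=\frac{p}{q}\in\mathbb{Q}$ then after $q$ cycles the phase advances by $q(x_4-x_3)=p\,x_1\equiv 0\pmod{x_1}$ and, the conserved data being unchanged, the tuple returns to its initial value, so the solution is periodic. Therefore the solution is non-periodic exactly when $\alpha=\frac{x_4-x_3}{x_1}\notin\mathbb{Q}$, as claimed.

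I expect the main obstacle to be the bookkeeping hidden in the central claim: one must check that, starting from $C_4$, the orbit returns to a Case-$C_4$ configuration after a controlled, finite number of transitions, that the first coordinate stays the global maximum throughout so that the affine formulas of Table~\ref{Table_cases} apply verbatim, and that the net advance of the phase over a cycle equals $x_4-x_3$ modulo $x_1$ with no spurious drift. Resolving the branching of the routes in Figure~\ref{Diagrama} and the boundary (tie) configurations between Cases is the delicate part; since this is exactly the analysis performed in \cite{Lin} to derive Eq.~(10), the proposition is obtained by specializing that argument to the arithmetic nature of $\frac{x_4-x_3}{x_1}$.
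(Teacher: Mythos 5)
Your proposal is correct in substance and takes essentially the same route as the paper, which offers no independent proof but refers to the argument producing Eq.~(10) of \cite{Lin}; your return-map reconstruction is exactly that argument, and it is also what the paper's own machinery encodes (Remark~\ref{Remark2}: every route sends the $C_4$-tuple $\big(x,tx+y-s(w-z),z,w\big)$ to $\big(x,(t+1)x+y-(s+n)(w-z),z,w\big)$ with $x,z,w$ fixed). One detail in your key claim is transposed, although harmlessly: the phase is the second coordinate of the $C_4$-tuple, which lives in the interval $[x_3,x_4]$ of length $x_4-x_3$, and each return to $C_4$ increments $t$ by exactly $1$ while $s$ increases by a variable $n\geq 1$; hence the return map adds $x_1$ and reduces modulo $x_4-x_3$, so the rotation number is $\frac{x_1}{x_4-x_3}$, not a shift by $x_4-x_3$ inside $[0,x_1]$ as you wrote. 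Since a nonzero ratio is rational exactly when its reciprocal is, both of your implications survive the swap verbatim: periodicity forces $k x_1=N(x_4-x_3)$ for positive integers $k,N$, giving $\frac{x_4-x_3}{x_1}=\frac{k}{N}\in\mathbb{Q}$, while rationality of the ratio makes the rotation, hence the sequence of $C_4$-tuples, hence the solution, periodic (the degenerate case $x_4=x_3$ should be noted separately: then the tuple is $(x_1,c,c,c)$, monotone and periodic, consistent with the ratio being $0\in\mathbb{Q}$). The bookkeeping you flag as the main obstacle—finiteness of the $C_1$ and $C_3$ loops so that the orbit returns to $C_4$ infinitely often—is indeed established in \cite{Lin} and re-derived in the paper, so deferring to it is legitimate.
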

We remark that requiring  in the above result that the conditions of Case $C_4$ are satisfied  does not imply a loss of generality, as demonstrated in \cite{Lin}, and as also can be deduced from the contents of Section \ref{Sec_condU}.

The following result establishes that each initial condition leading to a non-periodic sequence has, arbitrarily close, initial conditions leading to periodic sequences, and that the set of periods of these solutions is not bounded.

\begin{proposition}
Let the tuple $\bm{x}=(x_1,x_2,x_3,x_4)\in\mathbb R^4$ generate a non-periodic orbit under Equation~(\ref{Eq_main}). Let $\mathcal{U}=\mathcal{U}(\bm{x})$ be an arbitrary neighbourhood of $\bm{x}$. Then, there are tuples in $\mathcal{U}$ that generate periodic sequences of arbitrarily large period.
\end{proposition}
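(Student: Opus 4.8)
The plan is to reduce to a tuple satisfying the restrictions of Case $C_4$, to exhibit periodic tuples accumulating at $\bm{x}$ by making the quantity $\frac{x_4-x_3}{x_1}$ rational, and finally to argue that the periods of these tuples cannot remain bounded.

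First I would set up the reduction. The shift map $F(x_1,x_2,x_3,x_4)=(x_2,x_3,x_4,\max\{x_2,x_3,x_4,0\}-x_1)$ is a homeomorphism of $\mathbb{R}^4$, with continuous inverse $F^{-1}(a,b,c,d)=(\max\{a,b,c,0\}-d,a,b,c)$, and both $F$ and $F^{-1}$ preserve the property of generating a periodic orbit together with its minimal period, since these depend only on the bi-infinite sequence up to a shift. By Claim \ref{Claim0} and the reduction to Case $C_4$ discussed after Proposition \ref{P:noper} (see \cite{Lin}), the orbit of $\bm{x}$ contains a tuple $\bm{x}'=F^{m}(\bm{x})$ satisfying $x_1'\geq x_4'\geq x_2'\geq x_3'\geq 0$ with $x_1'=\max_n x_n$; note $x_1'>0$, for otherwise the orbit would be identically zero and hence periodic. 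Since $F^{m}$ is a homeomorphism taking $\bm{x}\mapsto\bm{x}'$ and the neighbourhood $\mathcal{U}$ onto a neighbourhood $F^{m}(\mathcal{U})$ of $\bm{x}'$, it suffices to prove the statement at $\bm{x}'$: a periodic tuple of period $>N$ found in $F^{m}(\mathcal{U})$ pulls back under $F^{-m}$ to a periodic tuple of the same period inside $\mathcal{U}$.

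Second I would produce periodic tuples arbitrarily close to $\bm{x}'$. The function $g(y_1,y_3,y_4)=\frac{y_4-y_3}{y_1}$ is continuous near $(x_1',x_3',x_4')$, and $g(x_1',x_3',x_4')$ is irrational by Proposition \ref{P:noper} (this is where non-periodicity of $\bm{x}'$ enters). Varying a single suitable coordinate (generically $y_3$, or $y_4$ in the degenerate boundary configurations where $x_2'=x_3'=0$) keeps the tuple inside the closed region defining Case $C_4$ and makes $g$ range over an interval of positive length whose closure contains the irrational value $g(x_1',x_3',x_4')$; such an interval contains rationals $\frac{p}{q}$ in lowest terms with $q$ arbitrarily large. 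For any of these the perturbed tuple $\bm{y}$ still satisfies the Case $C_4$ restrictions, so $y_1=\max_n y_n$, and has $g(\bm{y})\in\mathbb{Q}$, whence Proposition \ref{P:noper} makes $\bm{y}$ periodic. Letting the perturbation tend to $0$ yields periodic tuples converging to $\bm{x}'$.

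Finally I would establish unboundedness of the periods by a soft argument that needs no explicit period formula. For fixed $N$ the set $S_N=\bigcup_{n=1}^{N}\{\bm{z}:F^n(\bm{z})=\bm{z}\}$ of tuples with minimal period at most $N$ is closed, being a finite union of zero sets of the continuous maps $F^n-\mathrm{id}$. Since $\bm{x}'$ is non-periodic we have $\bm{x}'\notin S_N$, so some ball $B$ about $\bm{x}'$ is disjoint from $S_N$; by the previous step there is a periodic tuple $\bm{y}\in B\cap F^{m}(\mathcal{U})$, and $\bm{y}\notin S_N$ forces its period to exceed $N$. As $N$ is arbitrary, this gives periodic tuples of arbitrarily large period in $F^{m}(\mathcal{U})$, and pulling back by $F^{-m}$ completes the proof. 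The step I expect to require the most care is the second one: keeping the perturbed tuple inside the (possibly boundary) Case $C_4$ region while forcing $g$ to take rational values with large denominators, and checking that $C_4$ tuples automatically satisfy $y_1=\max_n y_n$ so that Proposition \ref{P:noper} applies. Alternatively, the unboundedness of the periods can be read off directly from the explicit description of the set of periods in \cite{Lin}, where the period grows with the denominator $q$ of $\frac{x_4-x_3}{x_1}$.
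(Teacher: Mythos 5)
Your proof is correct, but it takes a genuinely different route from the paper's. Both arguments start with the same reduction to a Case $C_4$ tuple with maximal first coordinate (Claim~\ref{Claim0} plus the route diagram), and both use Proposition~\ref{P:noper} to extract irrationality of $\frac{x_4'-x_3'}{x_1'}$. From there the paper is constructive: it applies Dirichlet's theorem on Diophantine approximation to the irrational $\frac{x}{w-z}$ to get reduced fractions $\frac{m}{n}$ with $n$ arbitrarily large, and then invokes the explicit result of \cite[Proposition 3.7]{Lin} to conclude that the tuples $\bigl(\frac{m}{n}(w-z),y,z,w\bigr)$, obtained by perturbing only the first coordinate, are periodic with period exactly $10n+11(m+n)$, which visibly tends to infinity. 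You instead exploit the full ``if and only if'' strength of Proposition~\ref{P:noper}: perturb one coordinate to make the ratio rational (hence the tuple periodic), and then get arbitrarily large periods by a soft topological argument --- the set $S_N$ of points of minimal period at most $N$ is closed and misses the non-periodic point $\bm{x}'$, so any periodic point sufficiently close to $\bm{x}'$ has period greater than $N$. Your route needs neither Diophantine approximation nor the period formula from \cite{Lin}, and it would work verbatim in any setting where periodicity is characterized by rationality of a continuous quantity; the price is that it is purely qualitative, producing no explicit periodic tuples and no information about which periods actually occur, whereas the paper's argument shows the periods grow with the denominators of the approximants. The one point you rightly flagged should be nailed down when writing this up: that a non-negative tuple satisfying the $C_4$ inequalities automatically has first coordinate equal to $\max\{x_n:n\geq 1\}$ (so that Proposition~\ref{P:noper} applies to the perturbed tuples) is not part of the $C_4$ definition, but it does follow from the route computations of Section~\ref{Sec_condU}, where every term generated from such a tuple is seen to be at most $x$; with that observation supplied, your argument is complete.
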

\begin{proof}
Without loss of generality, we can assume that the tuple $\bm{x}$ satisfies the conditions of Case $C_4$ [this is possible due to Claim~\ref{Claim0} and to the movement followed by the tuples, summarized in Figure~\ref{Diagrama}; also, notice that the infinite loops $C_1C_1C_1\ldots$ and $C_3C_3C_3\ldots$ are not admissible because, according to Table~\ref{Table_cases}, at some moment we will find a positive integer $n$ such that $x_2+n(x_3-x_4)$ is less than $x_4$ (so, we leave the Case $C_1$) or $x_2+n(x_3-x_4)$ is less than $x_3$ (and we leave the Case $C_3$)]. Therefore, any route in Figure~\ref{Diagrama} visits Case $C_4$ infinitely many times. 

By commodity in the notation, let us write the tuple $\bm{x}$ as $\bm{x}=(x,y,z,w)$. 
From Proposition~\ref{P:noper}, it must be fulfilled that $\frac{x}{w-z}\in\mathbb R\setminus\mathbb Q.$ 

Being $\bm{x}$ a tuple satisfying the inequalities of Case $C_4$, we have $x\geq w\geq y\geq z\geq 0$. Additionally, $w-z>0$, otherwise we would have the tuple $(x,w,w,w)$, which generates a $11$-periodic sequence as its initial conditions would be monotonic
(see~\cite{Gol}, where it is proved that, in fact, all the solutions of the general Equation~(\ref{e:intro}) are periodic of period $3k-1$ whenever the initial conditions are monotonic). 
 
In fact, we can even take another tuple $\widetilde{\bm{x}}$ in $\mathcal{U}$, arbitrarily close to $\bm{x}$, such that 
$\widetilde{\bm{x}}=(\widetilde{x}, \widetilde{y}, \widetilde{z}, \widetilde{w})$ verifies $\widetilde{w}-\widetilde{z}\neq 0,$ $\widetilde{x}>\widetilde{w}>\widetilde{w}-\widetilde{z}$, and $\frac{\widetilde{x}}{\widetilde{w}-\widetilde{z}}\in\mathbb R\setminus \mathbb Q$, so it is not restrictive to assume that the same tuple $\bm{x}$ also satisfies $x>w>w-z>0.$

By Dirichlet's Theorem relative to Diophantine approximation\footnote{The mentioned Dirichlet's result reads as follows.
 \begin{theorem}(Dirichlet)
Given $\alpha\in\mathbb R$ and $N>1$, there exist integers $x,y$ with $1\leq y\leq N$ and 
$\left|\alpha y - x\right|<\frac{1}{N}.$
When $\alpha$ is irrational, there are infinitely many reduced fractions $\frac{x}{y}$ with 
$\left|\alpha -\frac{x}{y}\right|<\frac{1}{y^2}.$
\end{theorem}} (see \cite{Liou} for a precise statement and proof), for the irrational number $\frac{x}{w-z}$ and $\varepsilon >0$ sufficiently small, we find that there exist infinitely many reduced fractions $\frac{m}{n}$ such that $\left|\frac{x}{w-z}- \frac{m}{n}\right|<\frac{1}{n^2}<\frac{\varepsilon}{w-z}.$ Moreover, being $\frac{x}{w-z}>1$ we can take $\frac{m}{n}>1.$

On the other hand, it was proved in \cite[Proposition 3.7]{Lin} that a tuple of the form $(y_1,y_2,y_3,y_4)$, with $y_1\geq y_4\geq y_2\geq y_3\geq 0$ and $y_1=\frac{q-p}{p}(y_4-y_3)$, where $p$ and $q$ are positive integers with $q\geq 2p+1$ and $\gcd(p,q)=1$, produces a periodic sequence of period $10p+11q.$ 
Setting $p:=n$ and $q:=m+n$, we get $\frac{m}{n}=\frac{q-p}{p}$ (notice that $\gcd(p,q)=\gcd(m,n)=1$, and $q>2p$ because $\frac{m}{n}>1$), hence we can apply this result to the tuple $\left(\frac{m}{n}(w-z),y,z,w\right)$ to obtain that it gives a periodic sequence of period $10p+11q=10n+11(m+n)$. Finally, observe that the tuples $\left(\frac{m}{n}(w-z),y,z,w\right)$ are close to $\bm{x}$, so they belong to $\mathcal{U}$, and that they present arbitrarily large periods.
\end{proof}

Finally, let us mention that, as main tool for the proof of Theorem \ref{t:teoprincipal}, we will use a consequence of Kronecker's Theorem, which we state for the sake of completeness (the reader can consult its proof in \cite{Nil}). Recall that the notation $\{\cdot\}$ is meant the fractional part of a number, that is, $\{x\}=x-\left\lfloor x\right\rfloor$, where $\left\lfloor x\right\rfloor$ denotes the largest integer less than or equal to $x$. Notice also that for any $\alpha\in\mathbb R,$ we have $\{t+\alpha\}=\{\alpha\}$ for all $t\in\mathbb Z$, since in this case $\left\lfloor t+\alpha\right\rfloor=\left\lfloor t\right\rfloor+\left\lfloor \alpha\right\rfloor$.

\begin{theorem} \label{Kro_Th}
\textbf{(Kronecker's Theorem)} Let $\delta$ be an irrational number. Then, for each non-empty open subinterval $U$ of $[0,1]$, there is $m\in \mathbb{N}$ such that  $\{m\cdot \delta\}\in U$.
\end{theorem}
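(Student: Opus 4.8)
The plan is to reduce the statement to a one-dimensional pigeonhole argument followed by a ``small step'' covering argument on the circle $\mathbb{R}/\mathbb{Z}$. Since $\{m\delta\}=\{m\{\delta\}\}$ for every $m$ (because $m\lfloor\delta\rfloor\in\mathbb{Z}$), I may assume without loss of generality that $\delta\in(0,1)$, and $\delta$ remains irrational. I write $|U|$ for the length of the target interval $U$ and fix an integer $N$ with $1/N<|U|$.

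First I would produce a nonzero integer $q$ whose multiple $q\delta$ is extremely close to an integer. Consider the $N+1$ fractional parts $\{0\},\{\delta\},\{2\delta\},\ldots,\{N\delta\}$ lying in $[0,1)$, and split $[0,1)$ into the $N$ subintervals $[\,(i-1)/N,\,i/N\,)$, $1\le i\le N$. By the pigeonhole principle two of these fractional parts, say $\{j\delta\}$ and $\{k\delta\}$ with $0\le j<k\le N$, fall in the same subinterval, so that, setting $q:=k-j\ge 1$ and $p:=\lfloor k\delta\rfloor-\lfloor j\delta\rfloor$, the quantity $\eta:=q\delta-p=\{k\delta\}-\{j\delta\}$ satisfies $|\eta|<1/N$. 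Irrationality of $\delta$ forces $\eta\neq 0$, so $\{q\delta\}=\{\eta\}$ lies either in $(0,1/N)$ (when $\eta>0$) or in $(1-1/N,1)$ (when $\eta<0$). Equivalently, one may invoke the Dirichlet statement already recorded in the footnote to the previous proposition to obtain such $q$ and $\eta$ directly.

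Next I would run the covering argument. Observe that $\{mq\delta\}=\{m\eta\}$ for every $m\ge 1$, so as $m$ increases the points $\{mq\delta\}$ advance around the circle by the fixed increment $\eta$, of absolute value strictly less than $1/N<|U|$. If $\eta>0$ the iterates move forward in steps shorter than the length of $U$ and hence cannot ``jump over'' $U$: there is a first $m$ for which $\{mq\delta\}$ enters $U$. If $\eta<0$ the same reasoning applies with the iterates moving backward. In either case we obtain $m'\ge 1$ with $\{m'q\delta\}\in U$, and taking $m:=m'q$ proves the theorem.

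The main obstacle is the rigorous justification of the covering step: one must argue that a sequence of consecutive points of $[0,1)$ whose successive gaps are all smaller than $|U|$ must meet every subinterval of length $|U|$, correctly handling the wrap-around at the endpoints $0$ and $1$ and the two possible signs of $\eta$. This is where the choice $1/N<|U|$ is used, and where the irrationality of $\delta$ is essential, since it guarantees $\eta\neq 0$ so that the iterates genuinely move and never stall.
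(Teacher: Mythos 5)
Your proposal is correct, but there is nothing in the paper to compare it against: the paper does not prove Kronecker's Theorem at all, it simply quotes the statement and refers the reader to the monograph \cite{Nil} for a proof. Judged on its own merits, your argument is the standard elementary one --- the Dirichlet/pigeonhole step manufactures an irrational increment $\eta=q\delta-p$ with $0<|\eta|<1/N<|U|$, and then the orbit $\{mq\delta\}=\{m\eta\}$ advances in steps too short to skip over $U$ --- and every step you write down is sound; in particular $m:=m'q$ is a genuine natural number, as the statement requires. The covering step that you flag as the main obstacle closes quickly if you unfold from the circle to the real line instead of arguing with wrap-around. Fix an open interval $(a,b)\subseteq U$ with $0\le a<b\le 1$ and $b-a>1/N$. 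If $\eta>0$, then $\eta<1\le a+1$, so the set of $m\ge 1$ with $m\eta\le a+1$ is non-empty and finite (the sequence $m\eta$ increases to $+\infty$); let $m_0$ be its largest element. The next term satisfies $a+1<(m_0+1)\eta=m_0\eta+\eta<(a+1)+(b-a)=b+1$, and since $0\le a<b\le 1$ this forces $\lfloor (m_0+1)\eta\rfloor=1$, hence $\{(m_0+1)\eta\}=(m_0+1)\eta-1\in(a,b)\subseteq U$. If $\eta<0$, observe that $\eta=q\delta-p$ is irrational, so $m\eta$ is never an integer and $\{m\eta\}=1-\{-m\eta\}$; therefore the case $\eta<0$ reduces to the positive case applied to the step $-\eta>0$ and the target interval $(1-b,1-a)$, which lies in $[0,1]$ and has the same length $b-a$. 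This disposes of the two sign cases and of the endpoints $0$ and $1$ with no case analysis on the circle, using exactly the two facts you isolated: $\eta\neq 0$ (from irrationality) and $|\eta|<|U|$ (from the choice of $N$).
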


From here, we derive the following result whose statement will be used in our discussion. 

\begin{corollary} \label{Cor_kro}
Let $\delta_1$ be an irrational number and let $\delta_2$ be an arbitrary real number. The set $S_{\delta_1} = \big\{ \{s\delta_1+\delta_2\big\}: s \in \mathbb{N}\}$ is dense in $[0,1]$.
\end{corollary}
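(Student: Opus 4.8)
The plan is to derive the corollary from Kronecker's Theorem (Theorem~\ref{Kro_Th}) by reducing the shifted orbit $\{s\delta_1+\delta_2\}$ to the unshifted one $\{s\delta_1\}$ via a rotation of the circle $\mathbb{R}/\mathbb{Z}\cong[0,1)$. First, I would replace $\delta_2$ by its fractional part $c:=\{\delta_2\}\in[0,1)$: since $\delta_2-c\in\mathbb{Z}$ and the fractional part is invariant under integer translations (as recalled just before Theorem~\ref{Kro_Th}), we have $\{s\delta_1+\delta_2\}=\{s\delta_1+c\}$ for every $s\in\mathbb{N}$. Hence it is enough to prove that $\{\{s\delta_1+c\}:s\in\mathbb{N}\}$ is dense in $[0,1]$, i.e., that it meets every non-empty open subinterval.

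Next, I would fix an arbitrary non-empty open subinterval $(a,b)\subseteq[0,1]$ and aim to produce some $s\in\mathbb{N}$ with $\{s\delta_1+c\}\in(a,b)$. The idea is to translate the target interval back by $c$ on the circle. Concretely, consider the set $U$ obtained by rotating $(a,b)$ by $-c$ modulo $1$; depending on whether this arc straddles the point $0$, the set $U$ is either a single open subinterval of $(0,1)$ or a disjoint union of two such intervals. In either case $U$ contains a genuine non-empty open subinterval $U_0$ of $[0,1]$, and it satisfies the defining property that $\{u+c\}\in(a,b)$ for every $u\in U$.

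Finally, I would apply Theorem~\ref{Kro_Th} to the irrational number $\delta_1$ and to the interval $U_0$, obtaining $m\in\mathbb{N}$ with $\{m\delta_1\}\in U_0\subseteq U$. By the construction of $U$ this yields $\{\{m\delta_1\}+c\}\in(a,b)$; and because $\{m\delta_1\}+c$ and $m\delta_1+c$ differ by the integer $\lfloor m\delta_1\rfloor$, their fractional parts coincide, so $\{m\delta_1+c\}\in(a,b)$. Since $(a,b)$ was arbitrary, $S_{\delta_1}$ intersects every non-empty open subinterval of $[0,1]$ and is therefore dense. The only point requiring a little care is the bookkeeping of the wraparound when the rotated interval crosses $0$; this is harmless, since a rotation of the circle sends the open arc $(a,b)$ to an open arc, which always contains an honest open subinterval of $[0,1]$ to which Kronecker's Theorem can be applied.
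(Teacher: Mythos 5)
Your proof is correct: the paper itself states Corollary~\ref{Cor_kro} without any written proof, presenting it as an immediate consequence of Kronecker's Theorem, and your rotation-of-the-circle argument (reducing $\delta_2$ to its fractional part, translating the target interval by $-\{\delta_2\}$ modulo $1$, and applying Theorem~\ref{Kro_Th} to an open subinterval of the rotated set) is precisely the standard derivation the paper intends. The only cosmetic imprecision is that the rotated set may be half-open at $0$ rather than a union of open subintervals of $(0,1)$, but since it always contains a non-empty open subinterval, this does not affect the argument.
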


\section{Evolution of non-negative tuples starting from Case $C4$}\label{Sec_condU}

In this part, we consider initial conditions $(x,y,z,w)$ with $x,y,z,w \in \mathbb{R}$ verifying the relation $x\geq w \geq y \geq z \geq 0$ (therefore, the initial tuple $(x,y,z,w)$ starts in the case $C_4$; this choice does not imply a loss of generality in our study, as it was already shown in \cite{Lin}, and as will become clear later, after giving the description of the routes).  We have changed the notation, being exonerated of the writing of sub-indexes,  for having a shorter writing and a more comfortable reading. Recall that the orbit of the general case is described by the diagram of Figure \ref{Diagrama} (see also Table~\ref{Table_cases}). 

As in \cite{Lin}, we will consider Figure \ref{Diagrama} as an oriented graph, $G=(V,U)$, where $V=\{C_1,C_2,C_3,C_4,C_5\}$ is a finite set and $U\subset V \times V$. The elements of $V$ are the vertices of the oriented graph $G$ and each element $(C_i, C_j)\in U$ will be called an arrow from $C_i$ to $C_j$. A path that always visits the same vertex is called a loop (our graph $G$ only admits two loops, one in $C_1$, and another in $C_3$). A route is a circuit that visits each vertex once, except the possibility of having loops. We denote them by $R_i$. Our graph only admits the following routes:

$R_1: C_4 \rightarrow C_5 \rightarrow C_2 \rightarrow C_1 \rightarrow \ldots \rightarrow C_1 \rightarrow C_4$.

$R_2: C_4 \rightarrow C_5 \rightarrow C_2 \rightarrow C_4$.

$R_3: C_4 \rightarrow C_5 \rightarrow C_3 \rightarrow \ldots \rightarrow C_3 \rightarrow C_2 \rightarrow C_1 \rightarrow \ldots \rightarrow C_1 \rightarrow C_4$.

$R_4: C_4 \rightarrow C_5 \rightarrow C_3 \rightarrow \ldots \rightarrow C_3 \rightarrow C_2 \rightarrow C_4$.

In order to clarify the study developed, we have structured this section as follows: In Subsection \ref{Sec_first}, for the sake of completeness, we analyze the evolution of the first terms of an orbit through the different routes. In Subsection \ref{Sec_general}, we repeat the process developed in the previous section, but with a general tuple of the form $\big(x,tx+y-s(w-z),z,w\big)$ since this kind of terms will play an essential role in the proof of the denseness.

\subsection{First terms of an orbit through the routes} \label{Sec_first}

Now, in order to understand how the initial conditions $(x,y,z,w)$ evolve under each route $R_i$, with $i=1,2,3,4$, we are going to write the terms that appear in the orbit in each case. To do so, we will apply the reasoning developed in the proof of Proposition 3.1 in \cite{Lin}, where all the computations where made (we write in bold the non-positive terms). Also, we will focus on the linear combinations of the form $tx+y-s(w-z)$ that appear, since they will play an essential role in the proof of the denseness. 

$\bullet$ Route $R_1: C_4 \rightarrow C_5 \rightarrow C_2 \rightarrow C_1 \rightarrow \ldots \rightarrow C_1 \rightarrow C_4$. We start with $(x, y, z, w)$ in $C_4$. Then, the orbit follows as: $$\bm{w-x}, w-y, w-z, \bm{-z}, x-z, x+y-w-z, x-w, (x,z, w+z-y, w) \ \text{in} \ C_5, $$ $$\bm{w-x}, w-z, y-z, \bm{-z}, x-z, x-w, x-y, (x,z,w,y) \ \text{in} \ C_2, $$
$$\bm{w-x}, w-z, \bm{-z}, w-z-y, x-z, x-w, \big(x, x+y-(w-z),z,w\big) \ \text{in} \ C_1,$$
$$\bm{y-(w-z)}, \bm{w-x-y+(w-z)}, w-z, \bm{-z}, -y + 2(w-z), x-z, x-w,$$ $$\big(x, x+y-2(w-z),z,w\big).$$ If this tuple verifies the conditions of $C_4$ we have finished the route. Otherwise, we will have a loop in $C_1$ and the orbit will follow as $$\bm{y-2(w-z)}, \bm{w-x-y+2(w-z)}, w-z, \bm{-z}, -y + 3(w-z), x-z, x-w,$$ $$\big(x,x+y-3(w-z),z,w\big).$$ Again, if the tuple is in $C_4$, we have finished the route, otherwise we will have another loop in $C_1$. Assume that we have $m_1\geq 0$ loops in $C_1$ (notice that $m_1<\infty$, because according to conditions of Figure~\ref{Diagrama}, at some point the condition $x_2+x_3\geq 2x_4$, or $x+y-n(w-z)+z\geq 2w$, will fail as $x+y-n(w-z)\stackrel{n}{\rightarrow}-\infty$). Then, the route will finish with the tuple $$\big(x, x+y-(m_1+2)(w-z),z,w\big) \ \text{in} \ C_4.$$

In the middle of the process we will have the tuples $$\big(x, x+y-j(w-z),z,w\big) \ \text{in} \ C_1 \ \text{with} \ j=1,\ldots, m_1+1.$$

Furthermore, every time that the orbit passes through $C_1$ the following non-positive terms will appear $$\bm{y-j(w-z)} \ \ \text{and} \ \ \bm{w - x - y + j(w-z)} \ \text{with} \ j=1,\ldots, m_1+1. $$

$\bullet$ Route $R_2: C_4 \rightarrow C_5 \rightarrow C_2 \rightarrow C_4$. We start with $(x, y, z, w)$ in $C_4$. Then, the orbit follows as: $$\bm{w-x}, w-y, w-z, \bm{-z}, x-z, x+y-w-z, x-w, (x,z, w+z-y, w) \ \text{in} \ C_5,$$ $$\bm{w-x}, w-z, y-z, \bm{-z}, x-z, x-w, x-y, (x,z,w,y) \ \text{in} \ C_2,$$
$$\bm{w-x}, w-z, \bm{-z}, w-z-y, x-z, x-w, \big(x, x+y-(w-z),z,w\big) \ \text{in} \ C_4.$$

Observe that the second term of the initial conditions, $y$, has evolved to $x + y -(w-z)$ under a route $R_2$. Moreover, the only non-positive terms that take place in $R_2$ are $w-x$ and $-z$.

$\bullet$ Route $R_4: C_4 \rightarrow C_5 \rightarrow C_3 \rightarrow \ldots \rightarrow C_3 \rightarrow C_2 \rightarrow C_4$. We start with $(x, y, z, w)$ in $C_4$. Then, the orbit follows as: $$\bm{w-x}, w-y, w-z, \bm{-z}, x-z, x+y-w-z, x-w, (x,z, w+z-y, w) \ \text{in} \ C_5,$$ $$\bm{w-x}, w-z, y-z, \bm{-z}, x-z, x-w, x-y, (x,z,w,y) \ \text{in} \ C_3,$$ $$\bm{w-x}, w-z, \bm{y-w}, \bm{-y+(w-z)}, x-z, x-w, x-y+(w-z), \big(x,z,w,y-(w-z)\big).$$
This tuple can verify either the case $C_2$ or $C_3$. If we have a loop in $C_3$, the orbit will continue as $$\bm{w-x}, w-z, \bm{y-(w-z)-w}, \bm{-y+2(w-z)}, x-z, x-w, x-y+2(w-z),$$ $$\big(x,z,w,y-2(w-z)\big).$$
Again, the new tuple satisfies either the conditions of case $C_2$ or those of case $C_3$. Assume that we have $m_3\geq 0$ loops in $C_3$ (as in route $R_1$, a similar reasoning gives $m_3<\infty$). Then, after that reiterative process, we will achieve the tuple $$\big(x,z,w, y -(m_3+1)(w-z)\big) \ \text{in} \ C_2.$$ Observe that in the process we have obtained the non-positive terms $$\bm{-y+j(w-z)} \ \ \text{and} \ \ \bm{y-(j-1)(w-z)-w} \ \text{with} \ j=1, \ldots, m_3+1.$$ Moreover, we have achieved the non-negative terms $y-j(w-z)$ with $j=1,\ldots, m_3+1$. 
Finally, if we continue computing the terms, we end going from $C_2$ to $C_4$ as follows:
$$\bm{w-x}, w-z, \bm{-z}, -y+(m_3+2)(w-z), x-z, x-w,$$ $$\big(x, x+y-(m_3+2)(w-z), z, w\big) \ \text{in} \ C_4.$$

$\bullet$ Route $R_3: C_4 \rightarrow C_5 \rightarrow C_3 \rightarrow \ldots \rightarrow C_3 \rightarrow C_2 \rightarrow C_1 \rightarrow \ldots \rightarrow C_1 \rightarrow C_4$. The terms appearing in the evolution of this route only contain a combination of elements of the routes $R_1$ and $R_4$, and the analysis will be omitted.

\begin{remark} \label{Remark1}
Notice that, independently of the route $R_i$, the initial conditions $(x,y,z,w)$ have evolved to $\big(x,x+y-n(w-z),z,w\big),$ where $n\in\mathbb{N}$ and $x+y-n(w-z)\geq 0$. Moreover, in the middle of the process we have obtained the non-negative terms $y-j(w-z)$ with $j=1,\ldots,n-1$. 
\end{remark}

\subsection{Evolution of a general tuple through the routes} \label{Sec_general}

From the results in Section \ref{Sec_first} we know that when an orbit of Equation \eqref{Eq_main} evolves through the routes $R_i$ there appear \emph{general tuples} of the form $\big(x,tx+y-s(w-z),z,w\big)$ where $t,s\in\mathbb{N}$ and $s\geq t$.  
In this subsection, with the intention of clarifying which terms appear in the orbit of general tuples, we are going to describe the routes $R_i$ again, but now when we start with such a tuple. The content of the section is instrumental and we will use it in the proof of Theorem \ref{t:teoprincipal}.

\medskip

$\bullet$ Route $R_1: C_4 \rightarrow C_5 \rightarrow C_2 \rightarrow C_1 \rightarrow \ldots \rightarrow C_1 \rightarrow C_4$. Let us consider the tuple $\big(x,tx+y-s(w-z),z,w\big)$ in $C_4$. Then, the orbit will evolve as follows: $$\bm{w-x}, w-tx-y+s(w-z), w-z, \bm{-z}, x-z, (t+1)x+y-s(w-z)-z-w, x-w $$ $$\big(x,z,w+z-tx-y+s(w-z), w\big) \ \text{in} \ C_5,$$ $$\bm{w-x}, w-z, -z+tx+y-s(w-z), \bm{-z}, x-z, x-w, (1-t)x-y+s(w-z), $$ $$\big(x,z,w,tx+y-s(w-z)\big) \ \text{in} \ C_2,$$ $$\bm{w-x}, w-z, \bm{-z}, -tx-y+(s+1)(w-z), x-z,x-w,$$ $$\big(x,(t+1)x+y-(s+1)(w-z),z,w\big) \ \text{in} \ C_1, $$ $$ \bm{tx+y-(s+1)(w-z)}, \bm{w-(t+1)x-y+(s+1)(w-z)}, w-z, \bm{-z},$$ $$-tx-y+(s+2)(w-z), x-z, x-w,$$ $$\big(x,(t+1)x+y-(s+2)(w-z), z,w\big).$$

This last tuple can either verify the conditions of $C_4$, and we would have ended the route, or verify again $C_1$. Let us assume that we have $m_1\geq 0$  loops in $C_1$  (an argument analogous to that of Section \ref{Sec_first} shows that $m_1<+\infty$), then we will have the tuples $$\big(x, (t+1)x+y-(s+j)(w-z), z,w\big) \ \text{with} \ j=1,\ldots, m_1+1,$$ verifying the case $C_1$ and we will end the route with $$\big(x,(t+1)x+y - (s+m_1+2)(w-z),z,w\big) \ \text{in} \ C_4.$$ Moreover, we emphasize that in that process it will appear the following non-positive terms
 $$\bm{tx+y-(s+j)(w-z)} \ \ \text{and} \ \ \bm{w-(t+1)x-y+(s+j)(w-z)}, \ j=1, \ldots, m_1+1.$$

$\bullet$ Route $R_2: C_4 \rightarrow C_5 \rightarrow C_2 \rightarrow C_4$. Let us consider the tuple $\big(x,tx+y-s(w-z),z,w\big)$ in $C_4$. Then, the orbit will evolve as follows: $$\bm{w-x}, w-tx-y+s(w-z), w-z, \bm{-z}, x-z, (t+1)x+y-s(w-z)-z-w, x-w $$ $$\big(x,z,w+z-tx-y+s(w-z), w\big) \ \text{in} \ C_5,$$ $$\bm{w-x}, w-z, -z+tx+y-s(w-z), \bm{-z}, x-z, x-w, (1-t)x-y+s(w-z), $$ $$\big(x,z,w,tx+y-s(w-z)\big) \ \text{in} \ C_2,$$ $$\bm{w-x}, w-z, \bm{-z}, -tx-y+(s+1)(w-z), x-z,x-w,$$ $$\big(x,(t+1)x+y-(s+1)(w-z),z,w\big) \ \text{in} \ C_4.$$

Now, the non-negative linear combination $tx+y-s(w-z)$ has evolved to $(t+1)x+y-(s+1)(w-x)$ under a route $R_2$. Furthermore, the only non-positive terms that take place in $R_2$ are $w-x$ and $-z$.

$\bullet$ Route $R_4: C_4 \rightarrow C_5 \rightarrow C_3 \rightarrow \ldots \rightarrow C_3 \rightarrow C_2 \rightarrow C_4$. Let us consider the tuple $(x,tx+y-s(w-z),z,w)$ in $C_4$. Then, the orbit will evolve as follows: $$\bm{w-x}, w-tx-y+s(w-z), w-z, \bm{-z}, x-z, (t+1)x+y-s(w-z)-z-w, x-w $$ $$\big(x,z,w+z-tx-y+s(w-z), w\big) \ \text{in} \ C_5,$$ $$\bm{w-x}, w-z, -z+tx+y-s(w-z), \bm{-z}, x-z, x-w, (1-t)x-y+s(w-z),$$ 
\begin{equation}\label{e:enC3}
\big(x,z,w,tx+y-s(w-z)\big) \ \text{in} \ C_3,
\end{equation}
\begin{equation}\label{e:enC32}
\bm{w-x}, w-z, \bm{tx+y-s(w-z)-w}, \bm{-tx-y+(s+1)(w-z)},
\end{equation}
$$x-z, x-w, (1-t)x-y+(s+1)(w-z), \big(x,z,w, tx+y-(s+1)(w-z)\big).$$

Now, we can be either in $C_2$ or in $C_3$. Assume that we have $m_3\geq 0$ loops in $C_3$ (as before, is easy to check that $m_3<+\infty$). Then, the reiterative process in $C_3$ will end with the tuple $$\big(x,z,w, tx+y-(s+m_3+1)(w-z)\big) \ \text{in} \ C_2.$$ Apart from this tuple, in the middle, after each loop we would have obtained the tuples $$\big(x,z,w, tx+y-(s+j)(w-z)\big) \ \text{in} \ C_3 \ \text{with} \ j=1,\ldots, m_3.$$ Moreover, it should be highlighted that we would have achieved the non-positive terms $$\bm{tx+y-(s+j)(w-z)-w} \ \text{and} \ \bm{-tx-y+(s+j+1)(w-z)} \ \text{with} \ j=0,\ldots,m_3.$$

Next, once we have $\big(x,z,w, tx+y-(s+m_3+1)(w-z)\big) \ \text{in} \ C_2$, the orbit continues as $$\bm{w-x}, w-z, \bm{-z}, -tx-y+(s+m_3+2)(w-z), x-z, x-w, $$ $$\big(x,(t+1)x+y-(s+m_3+2)(w-z), z, w\big) \ \text{in} \ C_4.$$

$\bullet$ Route $R_3: C_4 \rightarrow C_5 \rightarrow C_3 \rightarrow \ldots \rightarrow C_3 \rightarrow C_2 \rightarrow C_1 \rightarrow \ldots \rightarrow C_1 \rightarrow C_4$.  The terms appearing through the evolution of this route are a combination of the elements appearing in the routes $R_1$ and $R_4$, and we will omit the analysis.

\begin{remark} \label{Remark2}
Notice that, independently of the route $R_i$, the tuple $\big(x,tx+y-s(w-z),z,w\big)$ has evolved to $\big(x,(t+1)x+y-(s+n)(w-z),z,w\big)$ where $n\in\mathbb{N}$ and $(t+1)x+y-(s+n)(w-z)\geq 0$. Moreover, in the middle of the process we have obtained the non-negative terms $tx+y-(s+j)(w-z)$ with $j=1,\ldots,n-1$. 
\end{remark}

\section{Proof of Theorem~\ref{t:teoprincipal}}\label{S:mainproof}
We divide the proof of our main result into two parts: one devoted to analyze the accumulation points obtained by the non-negative terms of the solution $(x_n)$; and the second one concerned with the non-positive terms of the solution and their corresponding set of accumulation points. The union of the two cases will give the final proof of Theorem~\ref{t:teoprincipal}.
\subsection{Density of the non-negative terms} \label{Sec_positive}

After the detailed description of the routes that has been made in the previous subsections, and taking into account Remarks \ref{Remark1} and \ref{Remark2}, we obtain the following result:

\begin{lemma}\label{Claim}
Let $(x,y,z,w)$ be initial conditions with $x,y,z,w \in \mathbb{R}$ verifying the relation $x\geq w \geq y \geq z \geq 0$.  For every $s\in\mathbb{N}$, there exists at least a $t\in \mathbb{N}$ with $t\leq s$, such that the linear combination $tx+y-s(w-z)\geq 0$ belongs to the orbit generated by such initial conditions under Eq.~(\ref{Eq_main}).
\end{lemma}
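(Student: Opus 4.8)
The plan is to iterate the route description of Subsection~\ref{Sec_general} starting from the tuple $(x,y,z,w)$ itself, and to read off directly from Remarks~\ref{Remark1} and~\ref{Remark2} which non-negative combinations $tx+y-s(w-z)$ appear along the orbit. I would first dispose of the degenerate case $w=z$: the chain $x\geq w\geq y\geq z$ then forces $y=z=w$, so for any $s$ the choice $t=0$ gives the term $y\geq 0$, which is the second entry of the tuple and hence already lies in the orbit. So I may assume $w-z>0$.

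For the bookkeeping, write $(x,y,z,w)=\big(x,\,0\cdot x+y-0\cdot(w-z),\,z,\,w\big)$ and regard the initial tuple as a general tuple at level $t=0$ with shift $s=0$. Since every route returns the orbit to Case $C_4$, and since each route is finite (for $w-z>0$ the quantities controlling the loops $C_1C_1\dots$ and $C_3C_3\dots$ tend to $-\infty$, so $m_1,m_3<\infty$), the orbit is an infinite concatenation of routes. Let $T_0,T_1,T_2,\dots$ be the successive tuples landing in $C_4$. By Remark~\ref{Remark1} for the first route and Remark~\ref{Remark2} for the subsequent ones, each has the form $T_k=\big(x,\,kx+y-S_k(w-z),\,z,\,w\big)$, where $S_0=0$ and $S_k=S_{k-1}+n_k$, the integer $n_k\geq 1$ being the exponent produced by the $k$-th route ($n_k=1$ for a route $R_2$ and $n_k\geq 2$ otherwise). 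In particular $S_k\geq k$, whence $S_k\to\infty$. I note that this works whether or not the orbit is periodic: in the periodic case the points $T_k$ eventually repeat, but the level $k$, which merely counts the completed routes, keeps increasing.

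The core of the proof is a tiling observation. By Remark~\ref{Remark2}, the route carrying $T_{k-1}$ (level $k-1$, shift $S_{k-1}$) to $T_k$ produces, as non-negative terms of the orbit, precisely the combinations $(k-1)x+y-s(w-z)$ for $s=S_{k-1},S_{k-1}+1,\dots,S_k-1$: here $s=S_{k-1}$ is the second entry of $T_{k-1}$ and the remaining ones are the intermediate terms listed in the Remark. Consequently, for each level $t\geq 0$, every combination $tx+y-s(w-z)$ with $S_t\leq s\leq S_{t+1}-1$ occurs in the orbit and is non-negative, and the consecutive blocks $[S_t,S_{t+1})$, $t=0,1,2,\dots$, partition $\mathbb{N}$. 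Given $s\in\mathbb{N}$, I would take the unique level $t$ with $S_t\leq s<S_{t+1}$; then $tx+y-s(w-z)\geq 0$ belongs to the orbit, and $t\leq S_t\leq s$ gives $t\leq s$, which is exactly the claim.

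The step that needs the most care is checking that the $s$-values are tiled without gaps or overlaps and with the correct level: one must confirm that the final shift $s=S_k$ of the $k$-th route equals the starting shift of the $(k+1)$-th, so that consecutive blocks abut, and that $n_k\geq 1$ always holds (this both forces $S_k\to\infty$ and yields the inequality $t\leq s$). Everything else is a direct transcription of the two Remarks.
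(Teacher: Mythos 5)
Your proof is correct and takes essentially the same approach as the paper: the paper's own justification of Lemma~\ref{Claim} is nothing more than an appeal to the route descriptions of Subsection~\ref{Sec_general} together with Remarks~\ref{Remark1} and~\ref{Remark2}, which you have simply made explicit through the block-tiling of the shifts $S_k$ (plus a welcome separate treatment of the degenerate case $w=z$). The one caveat is inherited from Remark~\ref{Remark2} itself rather than introduced by you: for a route of type $R_1$ the intermediate non-negative combinations are actually the second entries of the $C_1$-tuples, i.e.\ they carry level $k$ rather than $k-1$, but since $S_{k-1}+j\geq k$ for every $j\geq 1$ the key inequality $t\leq s$ still holds, so the tiling argument and the conclusion are unaffected.
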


Our goal is to prove that those non-negative terms $tx+y-s(w-z)$ that appear in the orbit are dense in the interval $[0,x]$. Notice that, since $x\geq tx+y-s(w-z) \geq 0$ for every linear combination that we are considering,  these terms belong to the interval $[0,x]$. 

Next,  we start dividing by $x>0$ the terms $tx + y - s(w-z)$ to simplify the analysis. So, we will study the accumulation of $t + \alpha - s\sigma$, where $\alpha = \frac{y}{x}$ and $\sigma = \frac{w-z}{x}.$ Notice that from Proposition~\ref{P:noper} it must hold 
\begin{equation} \label{Eq:sigma}
\sigma = \frac{w-z}{x}\in \mathbb{R} \setminus \mathbb{Q},  
\end{equation}
in order to not achieve periodicity, because otherwise at some moment we would find values of $s,s',t,t'$ for which $t - s\sigma=t' - s'\sigma$, and we will repeat the corresponding tuples, thus arriving to a periodic orbit.

Since $t + \alpha - s\sigma \in [0,1]$, it implies that  $t + \alpha - s\sigma = \{t + \alpha - s\sigma\}=$ $\{\alpha - s\sigma\}$.  By  Lemma~\ref{Claim}, $s$ goes through the natural numbers, hence by Corollary~\ref{Cor_kro} we get the density of the set of elements $t + \alpha - s\sigma$  in $[0,1]$ and, therefore, the density of the non-negative terms of an orbit in $[0,x]$.

\subsection{Density of the non-positive terms} \label{Sec_negative} 

In this section we prove the density of the non-positive terms in the interval $\big[\min\{-z,w-x\},0\big].$ Firstly, we gather in Table \ref{Table_negative} the non-positive terms that appear in each route $R_i$, $i=1,2,3,4$. This terms have been displayed in subsections \ref{Sec_first} and \ref{Sec_general}, and  obtained from the inspection of the proof of Proposition 3.1 in \cite{Lin}.

\begin{table}[ht] 
\centering
\begin{tabular}{@{}cccc@{}}
\toprule
\textbf{$R_1:$} & \begin{tabular}[c]{@{}c@{}} $w-x$ and $-z$\\ $tx + y - (s+j)(w-z)$ \\ $w - (t+1)x - y + (s+j)(w-z) $ \end{tabular} & $j\in \{1,\ldots, m_1+1\}$  \\ \midrule

\textbf{$R_2:$} & $w-x$ and $-z$  &  \\ \midrule

\textbf{$R_3:$} & The non-positive terms that appear in $R_1$ and $R_4$ \\ \midrule

\textbf{$R_4:$} & \begin{tabular}[c]{@{}c@{}} $w-x$ and $-z$ \\ $tx+y-(s+j-1)(w-z) - w$ \\ $-tx-y+(s+j)(w-z)$ \end{tabular} & $j\in\{1,\ldots, m_3+1\}$ \\  \bottomrule
\end{tabular}
\caption{Non-positive terms in the routes $R_i$.}
\label{Table_negative}
\end{table}

Next result establishes the bounds of the non-positive terms of a solution.
\begin{lemma} \label{L:claim2}
Given initial conditions $(x,y,z,w)$ with $x,y,z,w \in \mathbb{R}$ verifying the relation $x\geq w \geq y \geq z \geq 0$, then every non-positive term appearing in the corresponding orbit belongs to the interval $$\big[\min\{-z,w-x\},0\big].$$ 
\end{lemma}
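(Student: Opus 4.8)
The plan is to show that every non-positive term appearing in the orbit is bounded below by $\min\{-z, w-x\}$, since the upper bound of $0$ is immediate by definition (these are precisely the terms that are non-positive). Since all the non-positive terms have been catalogued in Table~\ref{Table_negative}, the strategy is a straightforward case analysis over the finitely many \emph{types} of non-positive terms that can appear in any route, using the constraints $x \geq w \geq y \geq z \geq 0$ together with the non-negativity of the companion linear combinations guaranteed by Lemma~\ref{Claim} and Remarks~\ref{Remark1} and~\ref{Remark2}.

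First I would dispatch the constant terms $w-x$ and $-z$, which appear in every route. Since $x \geq w$ we have $w - x \leq 0$, and since $z \geq 0$ we have $-z \leq 0$; both trivially exceed or equal $\min\{-z, w-x\}$, so these terms lie in the desired interval. The remaining work concerns the four families of ``linear combination'' terms drawn from routes $R_1$ and $R_4$ (route $R_3$ contributes only combinations already arising in $R_1$ and $R_4$, and $R_2$ contributes nothing beyond $w-x$ and $-z$).

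The heart of the argument is to bound the four parametric families. For the $R_1$ terms $\bm{tx+y-(s+j)(w-z)}$ and $\bm{w-(t+1)x-y+(s+j)(w-z)}$, and the $R_4$ terms $\bm{tx+y-(s+j-1)(w-z)-w}$ and $\bm{-tx-y+(s+j)(w-z)}$, the key observation is that each such negative term is paired with an adjacent \emph{non-negative} quantity in the orbit whose value is controlled. For instance, by Remark~\ref{Remark2} the combination $tx+y-(s+j)(w-z)$ that appears as a negative term differs by one step of $(w-z)$ from a non-negative combination of the same shape; writing the negative term as $\big(tx+y-(s+j-1)(w-z)\big) - (w-z)$ and using that the quantity in parentheses is $\geq 0$ while $(w-z) \leq x$ (which follows from $w \leq x$ and $z \geq 0$, so $w - z \leq w \leq x$, but more sharply one uses the specific non-negativity constraints), I would obtain the lower bound $-z$ or $w-x$ as appropriate. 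The pairing of $\bm{tx+y-(s+j)(w-z)}$ with $\bm{w-(t+1)x-y+(s+j)(w-z)}$ is especially useful: their sum is $w-x \geq \min\{-z,w-x\}$, and since each is $\leq 0$, each individually is $\geq w-x$, which immediately gives the bound for \emph{both} terms simultaneously. A similar additive pairing works in $R_4$: one checks that $\bm{tx+y-(s+j-1)(w-z)-w}$ and $\bm{-tx-y+(s+j)(w-z)}$ sum to $(w-z)-w = -z$, so each being non-positive forces each to be $\geq -z$.

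The main obstacle I anticipate is bookkeeping rather than conceptual: one must verify for each family that the companion term in the additive pairing is genuinely non-positive (so that the ``sum of two non-positive numbers'' argument validly bounds each summand below by the sum), and that the sign and index ranges in Table~\ref{Table_negative} are handled uniformly across $j$. This requires carefully tracking which of the two candidate lower bounds, $-z$ or $w-x$, is attained by each family; the cleanest route is to prove, for each pair, that the \emph{sum} equals exactly $-z$ or exactly $w-x$, so that non-positivity of both summands yields the per-term bound with no further estimation. Collecting the four families together with the constant terms $w-x$ and $-z$, every non-positive term is thereby shown to lie in $\big[\min\{-z,w-x\},0\big]$, completing the proof.
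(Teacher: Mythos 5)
Your proof is correct, and its engine is the same symmetry identity that the paper's proof uses: in each of the two pairs of non-positive families, the members sum to exactly $w-x$ (the pair arising at $C_1$) or exactly $-z$ (the pair arising at $C_3$). The difference is in how that identity is deployed. The paper first bounds one member of each pair directly from the case conditions: for the tuple $\big(x,(t+1)x+y-(s+j)(w-z),z,w\big)$ in $C_1$, the inequalities $x_1\geq x_2\geq x_4$ give $0\geq tx+y-(s+j)(w-z)\geq w-x$; for the tuple $\big(x,z,w,tx+y-(s+j-1)(w-z)\big)$ in $C_3$, the condition $x_3\leq x_2+x_4$ gives $-z\leq tx+y-(s+j-1)(w-z)-w\leq 0$; only then is the bound transferred to the companion term via the symmetry. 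You instead bound both members of each pair at once, from the elementary fact that two non-positive numbers summing to $w-x$ (resp.\ $-z$) are each at least $w-x$ (resp.\ $-z$). This is cleaner and bypasses the case-condition computations entirely, but it leans on the catalog (Table~\ref{Table_negative}, i.e.\ the bolded terms of the route analysis in Section~\ref{Sec_general}) to guarantee that \emph{both} members of each pair are genuinely non-positive whenever they appear in the orbit — a fact the paper's case-condition step re-derives rather than quotes. Since that catalog is established material, your reliance on it is legitimate and your argument is complete; note also that your intermediate sketch involving $(w-z)\leq x$ is superfluous once the pairing argument is in place, as you yourself observe.
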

\begin{proof}
On the one hand, the non-positive terms $tx+y-(s+j)(w-z),$ with $j=1,\ldots, m_1+1,$ appear while going from $$\big(x, (t+1)x+y-(s+j)(w-z),z,w\big) \ \text{in} \ C_1$$ to $$\big(x, (t+1)x+y-(s+j+1)(w-z),z,w\big).$$  Since, $\big(x, (t+1)x+y-(s+j)(w-z),z,w\big)$ verifies the case $C_1$, it yields to (see Table~\ref{Table_cases}) $$x \geq (t+1)x + y - (s+j)(w-z) \geq w,$$ or, equivalently, $$0\geq tx + y - (s+j)(w-z) \geq w-x.$$ So, the non-positive terms that appear when the orbit passes through $C_1$ belong to $[w-x,0]$. Moreover, it is easy to check that $tx+y-(s+j)(w-z)$ and $w-(t+1)x-y+(s+j)(w-z)$ are symmetric in the interval $[w-x,0]$ for every $j=1,\ldots, m_1+1$.

On the other hand, the non-positive terms $tx+y-(s+j-1)(w-z)-w$, with $j=1,\ldots,m_3+1$, appear while going from $$\big(x,z,w,tx+y-(s+j-1)(w-z)\big) \ \text{in} \ C_3$$ to $$\big(x,z,w,tx+y-(s+j)(w-z)\big).$$ Since, $\big(x,z,w,tx+y-(s+j-1)(w-z)\big)$ verifies the conditions from $C_3$, this means that (see again Table~\ref{Table_cases})  $$w \leq z + tx+y-(s+j-1)(w-z),$$ or $$-z \leq tx+y-(s+j-1)(w-z)-w \leq 0.$$ Hence, the non-positive terms that appear when the orbit passes through $C_3$ belong to the interval $[-z,0]$. Furthermore, it is easy to check that the linear combinations $tx+y-(s+j-1)(w-z) - w$ and $-tx-y+(s+j)(w-z)$ are symmetric in the interval $[-z,0]$. 
\end{proof}

Now, we will see that, in fact, the non-positive terms that appear in the orbit are dense in such interval $\big[\min\{w-x,-z\},0\big]$. To see that, we will proceed in three steps:
\begin{enumerate}
    \item We will see that the orbit of a non-periodic solution of Eq.~(\ref{Eq_main}) passes through the five cases $C_i$ an infinite number of times.

    \item We will study the non-positive terms that appear when the orbit passes through $C_1$ (routes $R_1$ and $R_3$) and we will prove that they are dense in $[w-x,0]$.
    \item We will focus on the non-positive terms that appear when the orbit passes through $C_3$ (routes $R_4$ and $R_3$) in order to see that they are dense in $[-z,0]$.
\end{enumerate}

Then, we will be able to gather those results to prove the density of the non-positive terms in the interval $\big[\min\{w-x,-z\},0\big]$.

\subsubsection{Evolution of a  non-periodic orbit through the cases $C_i$}

Now, we will see that the orbit of a non-periodic solution of Eq. (\ref{Eq_main}) must satisfy certain restrictions in the movement of routes.

Firstly, observe that the orbit cannot be an infinite concatenation of routes $R_2$. Indeed, if we start with initial conditions $(x,y,z,w)$ verifying the conditions of $C_4$, that is, $x\geq w \geq y \geq z \geq 0$, after a route $R_2$ we will have the tuple $\big(x, x + y - (w-z),z,w\big)$ satisfying $x \geq w \geq x + y - (w-z) \geq z \geq 0$. If we have another route $R_2$, we will obtain $\big(x, 2x + y - 2(w-z),z,w\big)$ with $x \geq w \geq 2x + y - 2(w-z) \geq z \geq 0$. Observe that the only change that takes place is in the second term, where we are adding the non-negative constant $x - (w-z)$. Therefore, there exists an $N\in \mathbb{N}$ such that $Nx + y -N(w-z) > w$, which contradicts the conditions of $C_4$.

This implies that, apart of the cases $C_2, C_4$ and $C_5$, the orbit must also pass through $C_1$ or $C_3$. We proceed to see that, in fact, the orbit travels through the Cases $C_1$ and $C_3$ infinitely many times for each one of them.

\begin{proposition} \label{P:claimC3C1}
Assume that the tuple of initial conditions $(x, y, z, w)$ verifies the conditions of Case $C_4$. Then, the orbit must verify the cases $C_1$ and $C_3$ infinitely many times.
\end{proposition}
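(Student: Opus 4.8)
The plan is to read off, from the case conditions of Table~\ref{Table_cases} together with the explicit route expansions of Section~\ref{Sec_condU}, exactly where in an orbit a positive linear combination $tx+y-s(w-z)$ of a prescribed size can appear, and then to combine this with the density of the non-negative terms already established in Subsection~\ref{Sec_positive}. I normalise everything by $x$ and set $\sigma=\frac{w-z}{x}$, which is irrational by Proposition~\ref{P:noper}; note that $w>z$ is automatic, since $\sigma\neq 0$ and $w\geq z$.

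The crucial step is a dictionary between the size of a positive combination and the case in which it may occur. Inspecting Table~\ref{Table_cases} and the lists of terms in Subsections~\ref{Sec_first}--\ref{Sec_general}, a term of the form $tx+y-s(w-z)$ appears in the orbit only as the second entry of a $C_1$- or $C_4$-tuple, or as the fourth entry of a $C_2$- or $C_3$-tuple (all the remaining terms produced along the routes have the shapes $w-x,\ -z,\ x-z,\ x-w,\dots$, which are not of this form). Now in a $C_4$-tuple the second entry lies in $[z,w]$, in a $C_2$- or $C_3$-tuple the fourth entry satisfies $x_4\leq x_3=w$, and in a $C_1$-tuple the second entry is $\geq w$. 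Hence a positive combination of value in $(w,x)$ can occur \emph{only} as the second entry of a $C_1$-tuple, so the route carrying it passes through $C_1$; and a positive combination of value in $(w-z,w)$ can occur only as the fourth entry of a $C_3$-tuple, or as the second entry $v$ of a $C_4$-tuple, in which case $v\geq w-z$ is exactly the condition (via the $C_5$-step) that the ensuing route be $R_3$ or $R_4$, so that the route carrying it passes through $C_3$ as well.

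With this dictionary the argument is short. By Subsection~\ref{Sec_positive} the non-negative terms of the orbit are dense in $[0,x]$, and a dense subset of $[0,x]$ meets every non-empty open subinterval in an infinite set; hence there are infinitely many positive terms in $(w,x)$ and infinitely many in $(w-z,w)$. Each route of the orbit consists of only finitely many tuples (recall $m_1,m_3<\infty$), so these infinitely many terms are distributed over infinitely many routes. By the dictionary, every route carrying a term of value in $(w,x)$ passes through $C_1$, and every route carrying a term of value in $(w-z,w)$ passes through $C_3$; therefore the orbit passes through $C_1$ infinitely often and through $C_3$ infinitely often, which is the assertion.

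I expect the main obstacle to be the verification of the dictionary: one must check, against Table~\ref{Table_cases} and the four route expansions of Section~\ref{Sec_condU}, that no positive combination of size exceeding $w-z$ can slip in outside a $C_3$- or a $C_1$-context. This is routine bookkeeping but has to be carried out for each route. Two remarks are in order. First, irrationality of $\sigma$ enters only through the density statement of Subsection~\ref{Sec_positive}, so non-periodicity is genuinely needed. Second, the target subintervals $(w-z,w)$ and $(w,x)$ are non-empty precisely when $z>0$ and $w<x$; in the degenerate endpoint cases $z=0$ or $w=x$ one of these intervals collapses, but then the corresponding interval $[-z,0]$ or $[w-x,0]$ in Theorem~\ref{t:teoprincipal} reduces to a single point, so the recurrence of that case is not actually required for the density conclusion.
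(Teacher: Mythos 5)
Your proposal is correct (in the same regime where the paper's own argument is valid), and it follows a genuinely different route. The paper proves the recurrence of $C_1$ and of $C_3$ separately, each time by contradiction: assuming the orbit eventually avoids $C_1$ (resp.\ $C_3$), it shows that for every $t$ this avoidance traps an integer $s$ in an interval of length $1-\frac{x-w}{w-z}$ (resp.\ $1-\frac{z}{w-z}$), and then applies Corollary~\ref{Cor_kro} to the fractional parts $\left\{\frac{tx+y}{w-z}\right\}$ (normalising by $w-z$, with $t$ as running parameter) to exhibit a $t$ for which no such integer exists. You instead argue directly: you recycle the density in $[0,x]$ of the combinations $tx+y-s(w-z)$ from Subsection~\ref{Sec_positive} (Kronecker with the other normalisation, by $x$, and $s$ running) and couple it with a classification of where a combination of a given size can sit. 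That classification checks out against Table~\ref{Table_cases} and the route expansions of Section~\ref{Sec_condU}: the non-negative pure combinations occur only as second entries of $C_1$/$C_4$-checkpoint tuples or fourth entries of $C_2$/$C_3$-checkpoint tuples, with respective ranges $[w,x]$, $[z,w]$, $[0,w-z]$, $[w-z,w]$; hence a value in $(w,x)$ forces a $C_1$-tuple, and a value in $(w-z,w)$ forces either a $C_3$-tuple or a $C_4$-tuple whose $C_5$-branching (third entry versus twice the second, i.e.\ $v\gtrless w-z$) sends the ensuing route through $C_3$ — the same branching inequality the paper uses in its $C_3$ half. There is no circularity, since Subsection~\ref{Sec_positive} relies only on Lemma~\ref{Claim} and Corollary~\ref{Cor_kro}, not on this proposition. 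One precision matters: the density you invoke must be that of the pure combinations $tx+y-s(w-z)$ — which is exactly what Subsection~\ref{Sec_positive} proves — and not of all non-negative orbit terms, since terms such as $x-z$ may also lie in $(w,x)$ but are not covered by your dictionary. What your approach buys: it is direct rather than by contradiction, it reuses work already done, it makes the mechanism transparent (density keeps producing values too large to live outside a $C_1$- or $C_3$-context), and it sidesteps the paper's side discussion of the exceptional index $\tilde t$ where $\frac{tx+y}{w-z}$ could be rational. What the paper's approach buys: a self-contained Diophantine criterion (an integer in a short interval) quantifying exactly how avoidance would constrain the orbit, with no need for the checkpoint classification.

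Your caveat about the degenerate cases is not a weakness relative to the paper; it is a point in your favour. When $x=w$ the paper needs some $t$ with $\left\{\frac{(t-1)x+y}{w-z}\right\}>1-\frac{x-w}{w-z}=1$, and when $z=0$ it needs $\left\{\frac{tx+y}{w-z}\right\}<\frac{z}{w-z}=0$; both are impossible, so the paper's contradiction evaporates precisely when your target intervals $(w,x)$ and $(w-z,w)$ collapse. In fact the statement itself fails there: if $x=w$, every returning tuple $(x,v,z,x)$ with $v<x$ satisfies $C_4$ and never $C_1$; if $z=0$, the $C_5$-branching always selects $C_2$, so $C_3$ is met at most finitely often. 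The proposition thus implicitly carries the hypotheses $x>w$ and $z>0$; you made this explicit and correctly observed that Theorem~\ref{t:teoprincipal} is unaffected, because the corresponding accumulation subinterval degenerates to a point.
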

\begin{proof}

To see it, we will argue by contradiction. Firstly, we will analyze the case of an orbit that, after certain iteration, does not pass through $C_1$ and, finally, we will study the case where the orbit, after a certain iteration, does not pass through $C_3$. In both cases, we will derive the corresponding contradiction.
 
Assume that, after a certain iteration, the orbit does not pass through $C_1$.  
Now, we will use that after each route ($R_2$ or $R_4$) the tuple verifying the case $C_4$, $\big(x, tx+y-s(w-z),z,w\big)$, satisfies (recall the conditions in Table~\ref{Table_cases}) $$w \geq tx+y-s(w-z),$$ \noindent or, equivalently, 
\begin{equation} \label{Ineq3}
    s \geq \frac{tx+y-w}{w-z}.
\end{equation}

On the other hand, if we go backwards in the orbit, taking into account that we are excluding the fact of passing through $C_1$, the tuple $\big(x, tx+y-s(w-z),z,w\big)$ in $C_4$ derives from the evolution of a tuple $\big(x,z,w,(t-1)x+y-(s-1)(w-z)\big)$, which belongs to the Case~$C_2$ (this can be easily seen from the inspection of Table \ref{Table_cases}). Thus, since the terms of the tuples verifying a certain case $C_i$ are non-negative, we have 
 $$(t-1)x+y-(s-1)(w-z) \geq  0,$$ \noindent so, 
\begin{equation} \label{Ineq4}
    s \leq \frac{(t-1)x+y}{w-z} +1.
\end{equation}

Hence, from (\ref{Ineq3}) and (\ref{Ineq4}), we achieve that
\begin{equation} \label{Ineq_joint2}
    \frac{tx+y-w}{w-z} \leq s \leq \frac{(t-1)x+y}{w-z} +1.
\end{equation}

From here, we will deduce that for every $t\in \mathbb{N}$, there exists an integer $s$ in the interval $$I_1 := \left[\frac{tx+y-w}{w-z}, \frac{(t-1)x+y}{w-z} +1 \right].$$

Observe that the length of such interval is $|I_1| = 1 - \frac{x-w}{w-z} \leq 1$. 
Observe also that at most  there exists a value $\tilde{t}$ for which 
$$\alpha=\frac{(\tilde{t}-1)x+y}{w-z}$$ is an integer number. Indeed, if $\alpha$ is an integer number, then $$ \frac{(\tilde{t}+p-1)x+y}{w-z}=\alpha+p\frac{x}{w-z}$$ must be irrational for any $p\in\mathbb{Z}\setminus\{0\}$ according to Proposition \ref{P:noper}, since $x/(w-z)\in\mathbb{R}\setminus\mathbb{Q}$ (recall Equation~\eqref{Eq:sigma}). So, for all $t=\tilde{t}+p$ with $p\in\mathbb{Z}\setminus\{0\}$, the term $\frac{(t-1)x+y}{w-z}$ is an irrational number.
	
To guarantee the existence of an integer $s \in I_1$, taking into account that $\frac{(t-1)x+y}{w-z}$ is not an integer number for all $t > \tilde{t}$, we need to ensure that $$\left\{\frac{(t-1)x+y}{w-z}\right\} \leq 1- \frac{x-w}{w-z},$$ where, as usual, $\{\cdot\}$ represents the fractional part of a number.

Indeed, if $A:= \frac{(t-1)x+y}{w-z}$ is not an integer number, then $\lfloor A+1\rfloor \in [A, A+1]$ where $\lfloor\cdot \rfloor$ represents the integer part of a number. So, $\lfloor A+1\rfloor \in I_1$ if and only if 
$\frac{tx+y-w}{w-z}\leq \lfloor A+1\rfloor=A+1-\{A\}$, hence
$$\{A\} \leq A- \frac{tx+y-w}{w-z}+1=1 - \frac{x-w}{w-z}.$$

Since $\frac{x}{w-z}$ is an irrational number, by Corollary \ref{Cor_kro}, the set $\left\{\left\{\frac{tx+y}{w-z} \right\}\right\}_{t\in \mathbb{N}, t\geq \tilde{t}}$ is dense in $[0,1]$. Therefore, it will exist  $t \in \mathbb{N}$ such that 
$\left\{\frac{(t-1)x+y}{w-z} \right\}>1-\frac{x-w}{w-z}$ and there will not exist the corresponding natural number~$s$.

To sum up, inequality (\ref{Ineq_joint2}) cannot hold for every $t\in \mathbb{N}$ and, consequently the route must pass through the case $C_1$. Furthermore, it must pass an infinite number of times, since otherwise, after the last time that it passes, we can apply the same argument to achieve a contradiction. 

\bigskip

Finally, assume that, after a certain iteration, the orbit does not pass through $C_3$, this means that it is composed eventually by the concatenation of routes $R_1$ and $R_2$. After each route, the tuple verifying the case $C_4$ will be of the form $\big(x, tx+y-s(w-z),z,w\big)$ where $t,s\in \mathbb{N}$ and $s\geq t$ (see Section~\ref{Sec_general}). Such tuple will evolve to $\big(x,z, z+w-tx-y+s(w-z), w \big)$ in $C_5$ (see Table~\ref{Table_cases}). Now, since it does not pass through $C_3$, according to the inequalities that determine Figure \ref{Diagrama}, for every $t\in \mathbb{N}$, the third term of that tuple must be greater than or equal to the double of the second one.  So the following inequality must hold: $$w-z \geq  tx + y - s(w-z),$$
\noindent or, equivalently, 
\begin{equation} \label{Ineq1}
    s \geq \frac{tx+y}{w-z} - 1.
\end{equation}

On the other hand, since the tuple $\big(x, tx+y-s(w-z),z,w\big)$ is in $C_4$, it holds that (see the conditions for cases $C_j$ in Table~\ref{Table_cases}) $z \leq tx+y-s(w-z)$. So, 
\begin{equation} \label{Ineq2}
    s \leq \frac{tx+y-z}{w-z}.
\end{equation}

Therefore, from (\ref{Ineq1}) and (\ref{Ineq2}), we have that 
\begin{equation} \label{Ineq_joint1}
    \frac{tx+y}{w-z} - 1 \leq s \leq \frac{tx+y-z}{w-z}.
\end{equation}

Thus, for every $t\in \mathbb{N}$, there exists a non-negative integer $s$ in the interval
$$I_2:= \left[\frac{tx+y}{w-z} - 1, \frac{tx+y-z}{w-z} \right].$$

Notice that the interval $I_2$ has length $|I_2| = 1 - \frac{z}{w-z} \leq 1$.  Also realize that, arguing as before, we can see that the number $\frac{tx+y}{w-z}$ can be an integer, or even a rational, at most for a single value of $\tilde{t}$. 

We will see that  to guarantee the existence of an integer $s\in I_2$, taking into account that  $\frac{tx+y}{w-z}$ is not an integer number for $t>\tilde{t}$, it is necessary that  $$\frac{z}{w-z} \leq \left\{ \frac{tx+y}{w-z} \right\}.$$ \noindent  Indeed,  if $A:= \frac{tx+y}{w-z}$ is not an integer number then, clearly, $\lfloor A \rfloor \in [A-1,A]$. By construction, $\lfloor A \rfloor \in \left[A-1,A - \frac{z}{w-z}\right]$ if and only if $\lfloor A \rfloor \leq A - \frac{z}{w-z}$ or, equivalently, $\frac{z}{w-z} \leq  A -\lfloor A \rfloor = \{A\}$.

Now, since $\frac{x}{w-z}$ is an irrational number, in order to not achieve periodicity, by Corollary \ref{Cor_kro}, the set $\left\{\left\{ \frac{tx+y}{w-z} \right\}\right\}_{t\in\mathbb{N},t>\tilde{t}}$ is dense in $[0,1]$. Thus, it will exist a number $t\in \mathbb{N}$ such that $\frac{z}{w-z} > \left\{\frac{tx+y}{w-z}\right\}$ and, therefore, it will not exist the corresponding natural number~$s$.

To sum up, inequality (\ref{Ineq_joint1}) cannot hold for every $t\in \mathbb{N}$ and, consequently, the route must pass through the case $C_3$. Moreover, notice that it has to pass an infinite number of times, since otherwise, after the last time that it passes through $C_3$ we can apply the same argument to achieve a contradiction.
This ends the proof of Proposition~\ref{P:claimC3C1}. 
\end{proof}

Definitely, we have seen that the orbit of a non-periodic solution of Eq. (\ref{Eq_main}) generated by the tuple $(x,y,z,w)$, and holding the inequalities of Case $C_4$, must  pass through every case $C_i$ infinitely many times. 

\subsubsection{Density of the non-positive terms in $[w-x,0]$}\label{ss:negativosA}

We start studying the non-positive terms that appear when the orbit passes through $C_1$.
According to Proposition~\ref{P:claimC3C1}, the orbit passes through $C_1$ infinitely many times. Moreover, the tuple verifies the conditions of Case $C_4$, $x\geq w\geq y\geq z$. We will only focus on those terms of the form $tx+y-s(w-z)\leq 0$, since the other non-positive terms that appear in such case, $w-(t+1)x-y+s(w-z)$, are symmetric in $[w-x,0]$ (see the proof of Lemma~\ref{L:claim2}), once we would have proved the density of the first ones it will be enough.

Take the initial conditions $(x,y,z,w)$. Every time that a route $R_1$ or $R_3$ takes place, the orbit will pass through $C_1$ and, then, there will appear non-positive terms of the form $\tilde{t}x+y-\tilde{s}(w-z)\leq 0$ with $\tilde{t},\tilde{s}\in\mathbb{N}$. 
Let us consider the sequence formed by those type of non-positive terms, that is, 
\begin{equation}\label{9a}
\big(t_nx+y-s_n(w-z)\big)_n,
\end{equation}
\noindent where $t_n,s_n\in \mathbb{N}$, $s_n \geq t_n$ and $s_{n+1}>s_n$ for every $n\geq 1$. Here, the sequence of natural numbers $(s_n)_n$ is increasing but does not necessarily increase one by one, so we cannot apply, directly, Corollary \ref{Cor_kro} to achieve the density. Hence, to prove the density of sequence \eqref{9a} in $[w-x,0]$, we will follow the next steps: (a)  We will embed the terms of  \eqref{9a} into the more general sequence
\begin{equation}\label{Eq:constC1}\big(\tilde{t}_nx + y - n(w-z)\big)_n,
\end{equation} \noindent where every term $\tilde{t}_nx + y - n(w-z)$ will belong to $[-x,0]$. Once this general sequence is constructed, we will prove that: 
(b) The sequence \eqref{Eq:constC1} is dense on $[-x,0]$; and (c) the terms of \eqref{Eq:constC1}  that appear in  \eqref{9a} belong to $[w-x,0]$ while the other terms belong to $[-x,w-x]$, which will complete the proof.

\medskip

\noindent (a) Now we construct the sequence \eqref{Eq:constC1}.  Given $n\in \mathbb N$, we have two possibilities:
\begin{itemize}
\item If there exists a term in \eqref{9a} of the form $t_nx+y-n(w-z)$ (i.e. such that $s_n=n$), we set $\tilde{t}_n=t_n$.

\item Otherwise, if there is not such a term  $t_nx+y-n(w-z)$ in \eqref{9a}, by Lemma~\ref{Claim} we know that for every natural number $n$ there exists a number $t \in \mathbb{N}$ such that $x \geq tx + y - n(w-z) \geq 0$, so $(t-1)x +y - n(w-z)$ will belong to $[-x,0]$. In this situation, we take $\tilde{t}_n=t-1$.
\end{itemize}

\noindent (b) Once we have constructed the sequence~(\ref{Eq:constC1}), in order to apply Corollary \ref{Cor_kro}, we divide its terms by $x$, obtaining the associated $\tilde{t}_n+\alpha - n\sigma \in [-1,0]$, where $\alpha = \frac{y}{x}$ and $\sigma = \frac{w-z}{x}\in \mathbb{R}\setminus \mathbb{Q}$ (recall Equation~\eqref{Eq:sigma}). 

Since $\tilde{t}_n+\alpha - n\sigma \in [-1,0]$ 
and $\{\tilde{t}_n+\alpha -n\sigma\}= \left\{\alpha -n\sigma\right\}$, by Corollary \ref{Cor_kro} we get the density of 
$\left(\tilde{t}_n+\alpha - n\sigma\right)_n$ in $[-1,0]$ and, therefore, the density of the sequence $\big(\tilde{t}_nx + y - n(w-z)\big)_n$ in $[-x,0]$.

\noindent (c) We claim that every term of the subsequence \eqref{9a} belongs to $[w-x,0]$, while the rest of the terms of \eqref{Eq:constC1} belong to $[-x,w-x]$.
We have already seen the first claim at the beginning of the Section~\ref{Sec_positive}, in Lemma~\ref{L:claim2}, so we will only study those terms that do not appear in the orbit.

Let us consider a non-positive term $\tilde{t}_nx+y-n(w-z)$ which does not appear in the orbit. From our study in Section~\ref{Sec_general}, it is known that the positive term $(\tilde{t}_n+1) x+ y -n(w-z)=tx+ y -n(w-z)$ should be part of some of the positive tuples appearing in the evolution of the initial tuple $(x,y,z,w)$, namely: $(x,tx+y-n(w-z),z,w)$ in the Case $C_1$ or  $C_4$;  or $(x,z,w,tx+y-n(w-z)$ in the Cases $C_2$ or $C_3$ (Case $C_5$ is excluded). 

\begin{itemize}
\item If $\big(x, tx+y-n(w-z),w,z\big)$ is a tuple in Case $C_1$, then it is easily seen that $(t-1)x+y-n(w-z)=\tilde{t}_n+y-n(w-z)$ belongs to the orbit, contrarily to our hypothesis on the value $\tilde{t}_n$.
\item When $\big(x, z,w, tx+y-n(w-z)\big)$ is a tuple in Case $C_2$ or Case $C_3$, the conditions appearing in Table~\ref{Table_cases} impose, in particular, that $x_3\geq x_4,$ that is,  $w\geq tx+y-n(w-z)$, which is equivalent to $-x+w\geq (t-1)x + y -n(w-z)$, as desired.
\item For the tuple  $\big(x, tx+y-n(w-z),w,z\big)$ being in Case $C_4$, we take into account that now the conditions imposed, among others, that $x_4\geq x_2$, so $w\geq tx+y-n(w-z)$, which yields the desired inequality $-x+w\geq (t-1)x + y -n(w-z)$.
\end{itemize}

In conclusion, we can split the sequence $\big(\tilde{t}_nx + y - n(w-z)\big)_n$, which is dense in $[-x,0]$, in two subsequences: the first one, formed by the non-positive terms that appear in the orbit, which are in $[w-x,0]$; and the second one, formed by those that do not appear, which are in $[-x,w-x]$. Therefore, we can assure that the sequence of the non-positive terms that belong to the orbit, $\big(t_nx + y - s_n(w-z)\big)_n$, is dense in $[w-x,0]$.

\subsubsection{Density of the non-positive terms in $[-z,0]$}\label{ss:negativosB}

Finally, we will study the non-positive terms that appear when the orbit passes through the Case $C_3$ (we will use implicitly the fact that, from  Proposition~\ref{P:claimC3C1}, this happens an infinite number of times). In concrete, we will only focus on those of the form $tx+y-(s-1)(w-z)-w\leq 0$, since they are symmetric $[-z,0]$ with the other non-positive terms, $-tx-y+s(w-z)$, that appear in such case. So, once we would have proved the density of the first ones it will be enough.

We proceed in an analogous way as in Section \ref{ss:negativosA}. Take the initial conditions $(x,y,z,w)$. Every time that a route $R_4$ or $R_3$ take place, the orbit will pass through $C_3$ and then we will have non-positive terms of the form $\tilde{t}x+y-\tilde{s}(w-z)-w\leq 0$. Let us consider the sequence formed by those non-positive terms, that is, 
\begin{equation}\label{9b}
\big(t_nx+y-s_n(w-z)-w\big)_n,
\end{equation}
\noindent where $t_n,s_n\in \mathbb{N}$, $s_n \geq t_n$ and $s_{n+1}>s_n$ for every $n\geq 1$. Here, the sequence of natural numbers $(s_n)_n$ is increasing but does not increase one by one so, as before, we will consider that \eqref{9b} is a subsequence of a more general sequence of the form
\begin{equation}\label{9c}
\big(\hat{t}_nx + y - n(w-z)-w\big)_n,
\end{equation}
\noindent where every term $\hat{t}_nx + y - n(w-z)-w$ belongs to $[-x,0]$.  
We proceed it in a similar way than in the previous subsection. (a) Given $n\in \mathbb N$:
 \begin{itemize}
\item If there exists a term $t_nx + y - n(w-z)-w$ in \eqref{9b}
(i.e. $s_n=n$), then we set $\hat{t}_n=t_n$. Observe that from the computations developed in Section~\ref{Sec_general}, and stated in the expressions \eqref{e:enC3} and \eqref{e:enC32}, we can deduce that the term $t_nx + y - n(w-z)-w$ appears by iterating a 
 tuple of the orbit of the form $(x, z, w, tx + y - n(w - z))$ that belongs the case Case $C_3$.

\item Otherwise, if there is not such a term  $t_nx + y - n(w-z)-w$ in \eqref{9b}, then by Lemma~\ref{Claim}, we know that, for every natural $n$, there exists a $t\in\mathbb{N}$, such that $x\geq tx+y-n(w-z) \geq 0$. If $tx+y-n(w-z)-w < 0$, we set $\hat{t}_n=t$; else if $tx+y-n(w-z)-w \geq  0$, then $(t-1)x+y-n(w-z)-w\leq 0$ and we set $\hat{t}_n=t-1$; in both cases, clearly $\hat{t}_nx + y - n(w-z)-w\in[-x,0]$.
\end{itemize}

\noindent (b) We divide $\hat{t}_nx + y - n(w-z)-w$ by $x$ so each term reduces to $\hat{t}_n + \alpha - n\sigma - \delta$, where $\alpha =\frac{y}{x}$, $\sigma = \frac{w-z}{x} \in \mathbb{R}\setminus \mathbb{Q}$ and $\delta = \frac{w}{x}$. Thus, $-1 \leq \hat{t}_n + \alpha - n\sigma - \delta\leq 0,$ and we have that $\hat{t}_n + \alpha - n\sigma - \delta = \{\hat{t}_n + \alpha - n\sigma - \delta\}-1$. Then, by Corollary \ref{Cor_kro}, we obtain the density of  the sequence $\left(\hat{t}_n + \alpha - n\sigma - \delta\right)_n$ in $[-1,0]$ and therefore, the density of the sequence \eqref{9c} in $[-x,0]$.

\noindent (c) Next, we claim that every term of the subsequence \eqref{9b} belongs to $[-z,0]$, while the rest of the terms of \eqref{9b} belongs to $[-x,-z]$. The first part of the claim is consequence of our study in Lemma~\ref{L:claim2} in Section~\ref{Sec_general}, 
so we will only study those terms that do not appear in the orbit.
 
Let us consider a non-positive term $\hat{t}_nx + y - n(w-z)-w$ in \eqref{9c} which does not appear in the orbit. We will keep track the evolution of this term. First, remember that, according to the development in Section~\ref{Sec_general}, we have that a non-negative term of the form $tx+y-n(w-z)$ exists (by Lemma \ref{Claim}) and it appears in some of the following type of tuples: $(x,tx+y-n(w-z),z,w)$ in the Case $C_1$ or the Case $C_4$; or $(x,z,w,tx+y-n(w-z)$ in the Cases $C_2$ or $C_3$ (the Case $C_5$ is excluded). 
\begin{itemize}
\item  If $(x_1,x_2,x_3,x_4)=\big(x, tx+y-n(w-z),z,w\big)$ is a tuple in the Case $C_1$, then it holds the conditions $x_1\geq x_2 \geq x_4 \geq x_3\geq 0.$ These conditions imply that
$tx+y-n(w-z)-w\geq 0$ therefore, by the definition of \eqref{9c}, $\hat{t}_n=t-1$. Furthermore, the conditions in $C_1$ imply that
$(t-1)x+y-n(w-z)\leq 0$ and $w-z\geq 0$.  Hence,   
$(t-1)x+y-(n+1)(w-z)\leq 0$, but this inequality is equivalent to $(t-1)x+y-n(w-z)-w \leq -z$, which gives $\hat{t}_nx + y - n(w-z)-w\leq -z$, as we wanted to prove.
\item For a tuple  $(x_1,x_2,x_3,x_4)=\big(x, z,w, tx+y-n(w-z)\big)$ in the Case $C_2$ or the Case $C_3$, the conditions appearing in Table~\ref{Table_cases} impose, in particular, that $x_3\geq x_4,$ that is,  $w\geq tx+y-n(w-z)$, so $tx+y-n(w-z)-w\leq 0$, and therefore (again, by the definition of  \eqref{9c}) $\hat{t}_n=t$.
We need to show, then, that $\hat{t}_nx +  y - n(w-z)-w=tx+y-n(w-z) -w \leq -z$. In Case $C_2$, from Table~\ref{Table_cases} we know the following relation between the coordinates of the tuple, $x_2+x_4\leq x_3$, that is, $z+tx+y-n(w-z)\leq w$, which is equivalent to $tx+y-n(w-z)-w\leq -z,$ so we finish the case. For a tuple in the Case $C_3$, by the study of Section~\ref{Sec_general}, we would find by iteration the non-positive element $tx+y-n(w-z)-w$, contrary to the definition of $\hat{t}_n$, which requires the non-existence of such non-positive elements.
\item For a tuple  $(x_1,x_2,x_3,x_4)=\big(x, tx+y-n(w-z),z,w\big)$  in the Case $C_4$, we know that $x_2\leq x_4$, so $tx+y-n(w-z) - w\leq 0$, therefore $\hat{t}_n=t.$ 
 Moreover, the tuple evolves to $(\tilde{x}_1,\tilde{x}_2,\tilde{x}_3,\tilde{x}_4)=\big(x, z, w+z -tx - y+n(w-z), w\big)$ in $C_5$, where the conditions $\tilde{x}_1\geq \tilde{x}_4 \geq \tilde{x}_3 \geq \tilde{x}_2$ must hold. We need to prove 
$\hat{t}_nx + y - n(w-z)-w=tx+y-n(w-z) -w \leq -z$. Once we have arrived to the tuple in $C_5$, we have two options: (i) If we pass from the Case $C_5$ to the Case  $C_3$, we obtain the tuple $\big(x, z, w, tx+y-n(w-z)\big)$ in $C_3$ but, by the developments in Section~\ref{Sec_general},  we know that by the iteration of this tuple  we obtain the element 
$tx+y-n(w-z)-w$ which, as in the preceding case, is contrary to the definition of $\hat{t}_n$. (ii) If we pass from the Case $C_5$ to the Case  $C_2$,
we find the new tuple $\big(x,z,w,tx+y-n(w-z)\big)$ in $C_2.$  
Using the information collected in Figure \ref{Diagrama} and Table~\ref{Table_cases}, we deduce that this tuple in $C_2$ comes from a tuple
$$
(\bar{x}_1,\bar{x}_2,\bar{x}_3,\bar{x}_4)=\big(x,z,z+w-tx-y+n(w-z),w\big)\in C_5
$$ satisfying  $\bar{x}_3\geq 2\bar{x}_2$, which means
$$w+z-tx-y+n(w-z) \geq 2z,$$ or, equivalently, $-z \geq tx+y-n(w-z)-w.$  
\end{itemize}

In conclusion, we can split the sequence $\big(\hat{t}_nx+y-n(w-z)-w\big)_n$, which is dense in $[-x,0]$, in two subsequences: the first one, formed by the non-positive terms that appear in the orbit, that are in $[-z,0]$; and the second one, formed by those that do not appear, which are in $[-x,-z]$. Therefore, we can assure that the sequence of the non-positive terms that appear in the orbit, $\big(t_nx+y-s_n(w-z)\big)_n$ is dense in $[-z,0]$.

\subsubsection{Proof of Theorem~\ref{t:teoprincipal}} \label{Sec_main}

Given real initial conditions $(x_1,x_2,x_3,x_4)$, by Section~\ref{Sec_prelim} we know that the initial tuple is equivalent, in the sense of Definition~\ref{Def_equi}, to a tuple of non-negative terms $(x,y,z,w)$, with $x=\max\{x_n:n\geq 1\}$, and holding the inequalities characterizing Case $C_4$, namely, $x\geq w \geq y\geq z$. Assume that they generate a non-periodic orbit. Then, $\frac{w-z}{x}\in\mathbb R\setminus\mathbb Q$ from Proposition~\ref{P:noper} and $x>z$, otherwise the initial conditions are monotone, in fact, constant, and the sequence would be periodic, a contradiction.

Finally, from the study carried out in Subsection~\ref{Sec_positive},  and in the Subsections~\ref{ss:negativosA} and \ref{ss:negativosB},  we conclude that the solution $(x_n)$ is dense in a compact interval, to be more precise, it accumulates in $\big[\min\{w-x,-z\},x\big]$.

\section{Conclusions and other future directions} \label{Sec_conclusions}

With the main result of this paper, we have deciphered the dynamics of non-periodic orbits of Equation~(\ref{Eq_main}). This contribution has allowed us to complement the results appearing in \cite{Cso} and \cite{Lin}. The set of results of these publications along with the ones of the present work fully characterize the dynamics of Equation~(\ref{Eq_main}). Below, we describe some open problems and possible future lines of research. We also present a final result in which we give a new invariant for the equation.

It could be of some interest to extend this result to higher order difference equations of max-type, in particular, for the fifth order equation $$x_{n+5}=\max\{x_{n+4},x_{n+3},x_{n+2}, x_{n+1},0 \}-x_n.$$ Surely, the strategy of routes $R_j$ and Cases $C_j$ can be translated to this context, with suitable modifications, with the aim of knowing both the set of periods of the equation as well as the behaviour of non-periodic orbits. Another possible extension is to pay attention to the study of max-equations of order $k$ and type $x_{n+k}=\max\{x_{n+k-1},\ldots,x_{n+1}, A \}-x_n$, being $A$ an arbitrarily fixed real number. To this respect, for orders $k=2$ and $k=3$ partial results are found in \cite{BT} whenever $A=1$ or $A=-1$.

Another further line of investigation is related with the connection between difference equations and their associate discrete dynamical systems. Notice that Equation~(\ref{Eq_main}) can be viewed as a discrete dynamical system $X_{n+1}=F(X_n)$, where $F:\mathbb R^4\rightarrow \mathbb R^4$ is given by $$ F(x,y,z,w)=(y,z,w,\max(0,y,z,w)-x).$$
We have seen that the limit sets of Equation \eqref{Eq_main}, in non-periodic cases, are closed intervals. These intervals can be seen as projections of the limit sets of the corresponding trajectories, say $\gamma$, of the discrete dynamical system associated to $F$, starting from arbitrary initial condition $X\in\mathbb{R}^4$. 
 Then, a natural question arises: to determine the topological characteristics of the limit sets of its orbits $$\omega_{F}(\gamma)=\{Y\in\mathbb R^4: \exists\, (n_j)_j\subset \mathbb N, n_1<n_2<\ldots<n_j<\ldots, \lim_{j\rightarrow \infty}F^{n_j}(X)=Y\}.$$  
Our numerical simulations suggest that these trajectories could densely fill closed curves of $\mathbb{R}^4$. Furthermore, these curves could be simple (without self-intersections). It would be of some interest to study the topology of these limit sets or prove, at least, that $\omega_{F}(\gamma)$ is a connected set.   

We present a couple of examples. In Figure \ref{f:1} we show a pair of views of a projection into $\mathbb{R}^3$ of $10^4$ iterates of map $F$ for the initial conditions $(2\sqrt{2},2,0,1)$. Since $(2\sqrt{2},2,0,1)$ is equivalent to the new tuple $(2\sqrt{2},1,0,1)$ in Case $C_4$, as a direct computation shows, Theorem~\ref{t:teoprincipal} establishes that the solution $(x_n)$ of Eq.~(\ref{Eq_main}) accumulates in the interval
$[1-2\sqrt{2},2\sqrt{2}]$. The  inspection of the iterates of $F$ with this initial condition indicates that the iterates fill a closed curve. The self-intersections that appear could be due to an effect of the projection of the curve into $\mathbb{R}^3$.
\begin{figure}[ht] 
\centering
\includegraphics[scale=0.34]{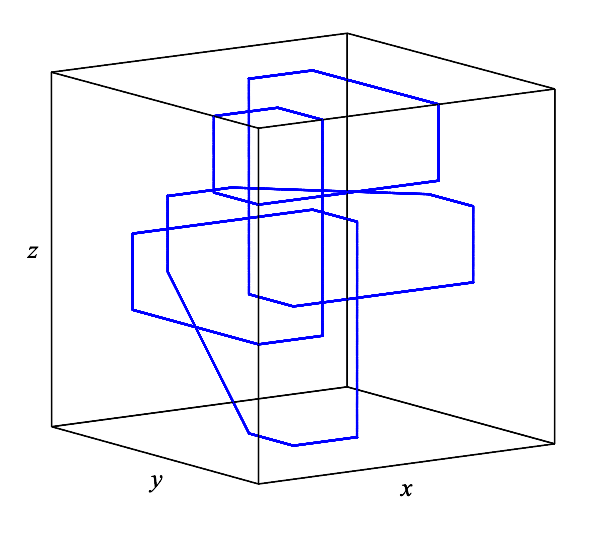}\includegraphics[scale=0.34]{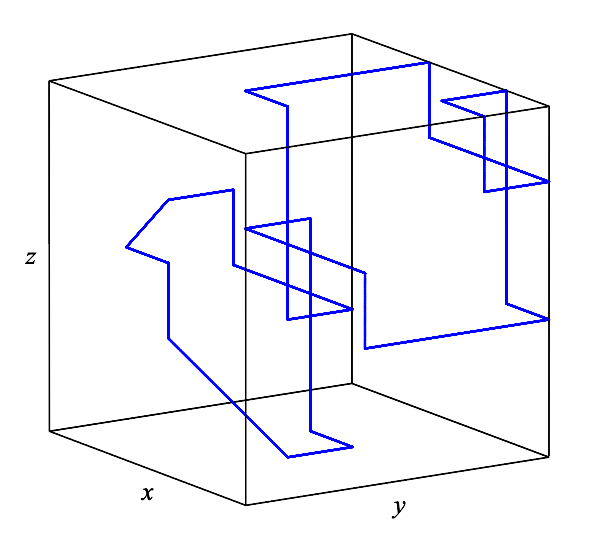}
\caption{Two views of the orbit of the map $F$ with initial conditions $(2\sqrt{2},2,0,1)$}\label{f:1}
\end{figure}

In Figure~\ref{f:2} we show some projections in $\mathbb{R}^3$ of $10^4$ iterates of map $F$ for the initial conditions $(\sqrt{2}+10\sqrt{3},1,2,0)$. Here, by a straightforward calculation it is easily seen that the initial tuple is equivalent to $(\sqrt{2}+10\sqrt{3},\sqrt{2}+10\sqrt{3}-17,1,2)$, a tuple in Case $C_4$, therefore from Theorem~\ref{t:teoprincipal}, the sequence $(x_n)$ generated by Eq.~(\ref{Eq_main}) accumulates in the interval $\left[2-\sqrt{2}-10\sqrt{3},\sqrt{2}+10\sqrt{3}\right]$.
The inspection of this projection into $\mathbb{R}^3$ indicates that it is a simple closed curve.
\begin{figure}[ht] 
\centering
\includegraphics[scale=0.34]{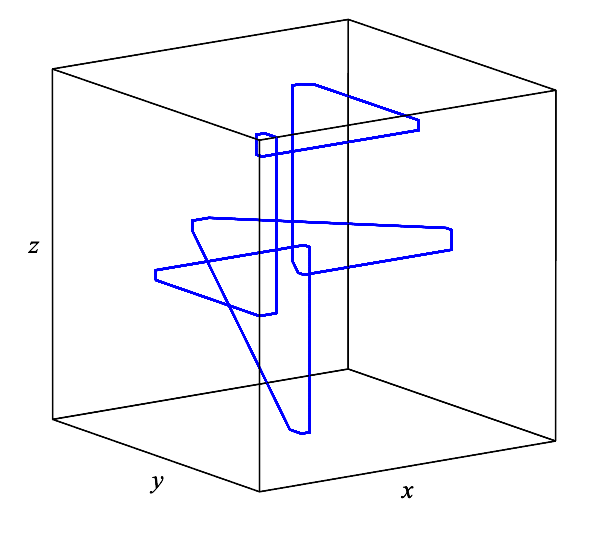}\includegraphics[scale=0.34]{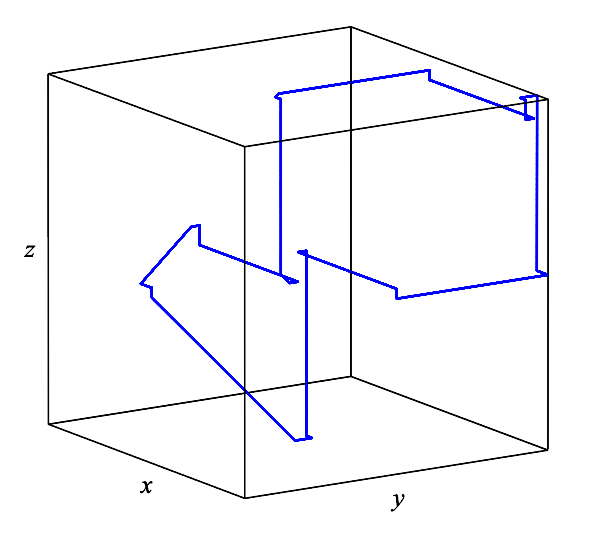}
\caption{Two views of the orbit of the map $F$ with initial conditions $(10\sqrt{3}+\sqrt{2},1,2,0)$}\label{f:2}
\end{figure}

Recall that a \emph{first integral} of the discrete dynamical system in $\mathbb{R}^n$ generated by a map $G$
is a non constant function in a nonempty open set $\mathcal{U}\subseteq \mathbb{R}^n$, $V:\mathcal{U}\rightarrow \mathbb{R}$, which is constant on
the orbits, i.e.
$$ V(G(\mathbf{x}))=V(\mathbf{x}) \mbox{ for all } \mathbf{x}\in{\cal{U}}.$$
A set $V_1,\ldots,V_k$ of first integrals of $G$ defined in an open set $\mathcal{U}$ are  \emph{functionally dependent} if there exists a real-valued function $R:\mathcal{U}\rightarrow \mathbb{R}$ not identically zero such that $R(V_1(\mathbf{x}),\ldots,V_k(\mathbf{x}))=0$ for all $\mathbf{x}\in \mathcal{U}$. Otherwise, we say that they are \emph{functionally independent}, \cite[pp. 84--85]{Ol}.
Also,  we will say that $G$ is
\emph{completely integrable} if it has $n$ functionally
independent first integrals.

In \cite{BT} it is proved that Equation \eqref{Eq_main} has an invariant that gives rise to the first integral of $F$, 
\begin{align*}
V_1(x,y,z,w)=&\max  \left(0, x , y , z , w\right)+
\max \left(0, -x \right)+\max \left(0, -y \right)+\max  \left(0, -z \right)\\ 
&+\max \left(0, -w \right).
\end{align*}
Our simulations are compatible with the fact that the map $F$ could have exactly three functionally independent first integrals. 
Remember that the map $F$ cannot be completely integrable because it is not globally periodic, see \cite[Theorem 1(b)]{CGM}.
Based on the result of our simulations, we thought that was interesting to find new first integrals of $F$ (or invariants for \eqref{Eq_main}) functionally independent with $V_1$.
In this sense we prove
\begin{proposition}\label{p:V2}
The function 
\begin{align*}
V_2(x, y, z, w)=&\max (0, x, y, z, w, x + w) + \max(0, x, y) + \max(0, y, z) + \max(0, z, w)\\
&- x - y - z - w
\end{align*}
is first integral of the map $F$. In other words, it is an invariant function for the recurrence \eqref{Eq_main}. Furthermore, there exist nonempty open sets in $\mathbb{R}^4$ where $V_2$ is functionally independent with $V_1$.
\end{proposition}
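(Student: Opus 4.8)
The plan is to verify the invariance identity $V_2(F(x,y,z,w)) = V_2(x,y,z,w)$ directly, and then establish functional independence from $V_1$ by exhibiting a point where their gradients are linearly independent. Writing $F(x,y,z,w) = (y,z,w,M-x)$ with $M := \max(0,y,z,w)$, the invariance requirement becomes the identity
\begin{align*}
&\max(0,y,z,w,M-x,y+M-x) + \max(0,y,z) + \max(0,z,w) + \max(0,w,M-x)\\
&\quad - y - z - w - (M-x)
= V_2(x,y,z,w).
\end{align*}
The strategy is to note that several of the max-terms in $V_2(F(\cdot))$ coincide with terms already present in $V_2(\cdot)$: the middle blocks $\max(0,y,z)$ and $\max(0,z,w)$ reappear verbatim, and the linear tail $-x-y-z-w$ is manifestly symmetric under the cyclic relabelling induced by $F$ once one accounts for the shift $w \mapsto M-x$. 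Thus the whole identity reduces to showing that the ``new'' combination
\[
\max(0,y,z,w,M-x,y+M-x) + \max(0,w,M-x) - (M-x)
\]
equals the ``old'' combination $\max(0,x,y,z,w,x+w) + \max(0,x,y) - x$.

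First I would dispose of the term $\max(0,y,z,w,M-x)$: since $M=\max(0,y,z,w)\ge 0$, we have $\max(0,y,z,w,M-x)=\max(M,M-x)=M+\max(0,-x)$, and similarly $\max(0,w,M-x)=\max(0,w)+\max(0,-x)$ whenever $M-x \le \max(0,w)$, which is the generic situation; the boundary cases must be tracked separately. The cleanest route is a case analysis on the sign of $x$ and on which of $0,y,z,w$ realizes $M$, reducing each max of several arguments to a simple expression. In each of the finitely many sign/order regimes, both $V_2(F(\cdot))$ and $V_2(\cdot)$ collapse to the same linear functional, confirming the identity. Because $F$ is piecewise linear, this exhaustive but routine verification is guaranteed to terminate; I would organize it by first fixing the sign of $x$ (which controls $\max(0,-x)$ and hence how $M-x$ compares to $M$ and to $w$) and then the ordering among $0,y,z,w$.

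For functional independence I would compute the gradients $\nabla V_1$ and $\nabla V_2$ on the open region where all the relevant maxima are attained by unique arguments, so that both functions are smooth (indeed affine) there. On such a region each gradient is a constant integer vector determined by which argument is active in each max-block. Picking a concrete regime—say $x$ large and positive with $x>w>y>z>0$ so that $\max(0,x,y,z,w)=x$, $\max(0,x,y,z,w,x+w)=x+w$, etc.—I would read off $\nabla V_1$ and $\nabla V_2$ explicitly and check that they are not scalar multiples of one another; since they are constant on an open set, this yields an open set of functional independence, as claimed.

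I expect the main obstacle to be the case analysis for the invariance identity: the term $\max(0,x,y,z,w,x+w)$ carries the extra ``interaction'' argument $x+w$, and its counterpart $y+M-x$ under $F$ behaves differently according to the sign of $x$, so one must be careful that the bookkeeping of the $\max(0,-x)$ contributions matches up across the two sides without double-counting. Once that term is handled correctly, the remaining pieces align by the cyclic structure of $F$, and the independence step is then a short explicit gradient computation.
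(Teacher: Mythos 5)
Your proposal follows essentially the same route as the paper's proof: you cancel the repeated blocks $\max(0,y,z)$ and $\max(0,z,w)$, reducing invariance to exactly the identity the paper writes as $\Delta V_2\equiv 0$ (your ``new combination equals old combination'' is that identity verbatim), you then propose a piecewise-linear case check, and for independence you use the same region $\{x>w>y>z>0\}$ the paper calls $\mathcal{U}$, on which $V_1=x$ and $V_2=x+w-z$ with gradients $(1,0,0,0)$ and $(1,0,-1,1)$. That part is correct and matches the paper.

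The substantive problem is the organization of the case analysis, which is the entire content of the invariance step and of which you execute no case. You claim that fixing the sign of $x$ and which of $0,y,z,w$ attains $M=\max(0,y,z,w)$ makes both sides ``collapse to the same linear functional''; this is false. In the regime $x\ge 0$, $M=y$, the term $\max(0,x,y,z,w,x+w)$ equals $x+\max(0,w)$ at $(x,y,z,w)=(10,1,0,0)$ but equals $y$ at $(1,10,0,0)$, so it is not a single linear functional there; likewise $\max(0,y,z,w,M-x,y+M-x)=y+\max(0,y-x)$ in that regime, which again requires comparing $x$ with $y$. The comparison your regimes omit is precisely the one the paper splits on first: whether $x\ge\max(0,y,z,w)$ (together with the sign of $x$). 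Under $x\ge M\ge 0$ everything does collapse --- $\max(0,x,y,z,w,x+w)=\max(x,x+w)$, $\max(0,x,y)=x$, and the two ``new'' maxima reduce to $M$ and $\max(0,w)$ --- giving $\Delta V_2=\max(0,w)-\max(x,x+w)+x=0$. Your scheme is repairable (refine to the common linearity cones of all six max terms, a finite polyhedral decomposition, and check each), but as written the verification is not merely left unexecuted: it is organized around a set of cases on which the asserted collapse fails, so the proof as described would break down when carried out.
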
 

\begin{proof} To prove the result, we have to show that  $\Delta V_2=V_2(y,z,w,F(x,y,z,w))-V_2(x,y,z,w)\equiv 0$. A straightforward computation yields to
\begin{align*}
\Delta V_2=&\max\{0, y, z, w, \max\{y,z,w,0\}-x, \max\{y,z,w,0\}-x+y \}\\
&+\max\{0,w,\max\{y,z,w,0\}-x\}
-\max\{0,x,y,z,w,x+w\}\\
& -\max\{0,y,z,w\}- \max\{0,x,y\} +2x.
\end{align*}
Now, we prove that $\Delta V_2\equiv 0$ when  $x\geq 0$ and $x \geq \max\{0,y,z,w\}$. The rest of the cases (namely, $0\leq x\leq \max\{0,y,z,w\}$; $x<0$ and $x\geq \max\{0,y,z,w\}$; and $x<0$ and $x\leq \max\{0,y,z,w\}$) can be done analogously. 
Indeed, suppose that $x\geq 0$ and $x \geq \max\{0,y,z,w\}$ then,
\begin{itemize}
\item $\max\{0, y, z, w, \max\{y,z,w,0\}-x, \max\{y,z,w,0\}-x+y \}=\max\{0, y, z, w\},$
\item $\max\{0,w,\max\{y,z,w,0\}-x\}=\max\{0,w\},$
\item $\max\{0,x,y,z,w,x+w\}=\max\{x,x+w\},$
\item $\max\{0,x,y\}=x.$
\end{itemize}
Hence
$$
\Delta V_2=\max\{0,w\}-\max\{x,x+w\}+x.
$$
If $w\geq 0$, then $\Delta V_2=w-x-w+x=0$; if $w< 0$, then $\Delta V_2=x-x=0$. Therefore, $\Delta V_2\equiv 0$.

\medskip

To prove that there exists open sets in which $V_1$ and $V_2$ are functionally independent, consider (for instance) an initial condition in the open set
$\mathcal{U}=\{(x,y,z,w):\, x>w>y>z>0\}$, that is, 
satisfying conditions of case $C_4$ with strict inequalities. A computation shows that, in this case,
$$V_1(x,y,z,w)=x \mbox{ and } V_2(x,y,z,w)=x+w-z,$$
which are obviously functionally independent.
\end{proof}

We find it interesting to comment on how we have found the second invariant $V_2$. Note that Equation \eqref{Eq_main} is the ultradiscretization, in the sense of \cite{ON}, of the $4$th--order Lyness' Equation or, equivalently, that $F$ is the ultradiscretization of the $4$-dimensional Lyness map
$$
L_4(x,y,z,w)=\left(y,z,w,\frac{a+y+z+w}{x}\right).
$$
It is known that $L_4$ has two functionally independent first integrals \cite{GKI}, see also \cite{CGM08,TKQ}:
\begin{align*}
H_1 (x, y, z, w) &=\frac{(a + x + y + z + w)(x + 1)(y + 1)(z + 1)(w + 1)}{x y z w },\\
H_2 (x, y, z, w) &=\frac{ (a + x + y + z + w + xw)(1 + x + y)(1 + y + z)(1 + z + w)}{xyzw}.
\end{align*}
It can be seen that $V_1$ is the ultradiscrete version of $H_1$. We have obtained $V_2$ as the ultradiscretization of $H_2$.

Although, as far as we know, it is not proven, the known results are in good agreement with the fact that the conjecture expressed in \cite{GKI}, which states that the maximum number of functionally independent first integrals of the $k$-dimensional Lyness map $L_k$ is~$\lfloor(k+1)/2\rfloor$, is true (see \cite{BR,CGM08,TKQ}). 
If this was correct,  $L_4$ should not admit  more than $2$ functionally independent first integrals. As mentioned above, our numerical simulations are compatible with the fact that the map $F$ could have three functionally independent first integrals (compare the graphs of Figures \ref{f:1} and \ref{f:2} and the graph of the iterations of a map $L_4$ that is presented in Figure 1 of \cite{CGM08}, where it is clearly intuited that the iterates of $L_4$ evolve over a $2$-dimensional surface). The existence of a third first integral for the map $F$ would show that the maps that come from the ultradiscretization of non--globally periodic rational maps can have more first integrals than the original rational maps (remember that in \cite[Theorem 3.5]{ON} it is shown that globally periodic rational maps give rise to globally periodic ultradiscrete maps; in such a  case, both maps will have as many first integrals as the phase space, according to the results in \cite{CGM}). 

We leave open the possibility of finding a new functionally independent first integral for the map $F$. 

\section*{Acknowledgements}
This work has been supported by the grant MTM2017-84079-P funded by \\ MCIN/AEI/10.13039/501100011033 and by ERDF ``A way of making Europe'', by the European Union. The second author acknowledges the group research recognition 2021 SGR 01039 from Ag\`{e}ncia de Gesti\'{o} d'Ajuts Universitaris i de Recerca.

\end{document}